\newtheorem{thm}{Theorem}[section]
\newtheorem{lem}[thm]{Lemma}
\newtheorem{prop}[thm]{Proposition}
\newtheorem{cor}[thm]{Corollary}
\newtheorem{rem}{Remark}
\newtheorem{sublem}[thm]{Sublemma}
\newtheorem{main}[thm]{Main theorem}
\title{The number of cusps of right-angled polyhedra \\ in hyperbolic spaces}
\author{Jun Nonaka}
\date{}
\begin{document}

\maketitle
\begin{abstract}
As was pointed out by Nikulin \cite{ref:nik} and Vinberg \cite{ref:vin}, 
a right-angled polyhedron of finite volume in the hyperbolic $n$-space 
$\mathbb{H}^n$ has at least one cusp for $n\geq 5$. 
We obtain non-trivial lower bounds on the number of cusps of such polyhedra. 
For example, right-angled polyhedra of finite volume must have at least three cusps for $n=6$. 
Our theorem also says that the higher the dimension of a right-angled polyhedron becomes, the more cusps it must have. 
\end{abstract}

\textbf{Key words:} Cusp, Right-angled polyhedron, Hyperbolic space, Combinatorics. \\

\textbf{MSC (2010)} 20F55, 51F15, 57M50\\

\section{Introduction}
\indent Let $P$ be a convex polyhedron in the hyperbolic $n$-space $\mathbb{H}^n$ 
with dihedral angles 
of the form $\pi /m$ $(m \in \mathbb{N} )$ at all its $(n-2)$-dimensional faces. 
We call this polyhedron a $Coxeter$ $polyhedron$. 
In particular, a polyhedron is called $right$-$angled$ if all its dihedral angles are $\pi /2$. 

\indent We call a polyhedron $acute$-$angled$ if all its dihedral angles do not exceed $\pi /2$. 
It is known that any $k$-dimensional face of an acute-angled polyhedron $P\subset \mathbb{H}^n$ 
belongs only to $(n-k)$ hyperfaces; any $k$-dimensional face is represented by the intersection of 
$(n-k)$ hyperfaces $($see \cite{ref:avs}$)$. 
In particular, any ordinary vertex belongs only to $n$ hyperfaces. 
If $P$ is a right-angled polyhedron, 
then the number of hyperfaces of $P$ which share one cusp is 
exactly $2(n-1)$. Any of these hyperfaces is parallel to one other 
and adjacent to the remaining $2(n-2)$ hyperfaces. 

\indent An $n$-dimensional combinatorial polytope is called $simple$ if any of its vertices belongs only 
to $n$ hyperfaces, and $simple$ $at$ $edges$ if any of its edges belongs only to $(n-1)$ hyperfaces. 
In addition, we call an $n$-dimensional hyperbolic polytope $almost$ $simple$ if it is simple at edges and 
any of its vertices not at infinity belongs only to $n$ hyperfaces. 
According to the above, any compact acute-angled polyhedron in $\mathbb{H}^n$ is simple, 
and any acute-angled polyhedron of finite volume with vertices at infinity added is simple at edges. 
In particular, any right-angled polyhedron of finite volume in $\mathbb{H}^n$ is almost simple. 

\indent Vinberg proved that there are no compact right-angled polyhedra in $\mathbb{H}^n$ for $n>4$ 
$($see \cite{ref:vin}$)$. 
On the other hand, Dufour proved the following theorem. 

\begin{thm}\label{thm:1-1} $(${\rm \cite{ref:du}}$)$
Right-angled polyhedra of finite volume may exist in $\mathbb{H}^n$ only if $n<13$. 
\end{thm}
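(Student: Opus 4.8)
The strategy is to squeeze the combinatorics of $P$ at a cusp against the Lorentzian geometry of the outward unit normals of its hyperfaces. Recall that the hyperfaces of $P$ correspond to vectors $e_1,\dots,e_N$ in Minkowski space $\mathbb{R}^{n,1}$ with $\langle e_i,e_i\rangle=1$, and that the Gram matrix $G=(\langle e_i,e_j\rangle)$ has signature $(n,1)$ (up to a radical). Since $P$ is right-angled, $\langle e_i,e_j\rangle=0$ when the $i$-th and $j$-th hyperfaces are adjacent, $\langle e_i,e_j\rangle=-1$ when they are asymptotically parallel (i.e. meet at a cusp), and $\langle e_i,e_j\rangle\le-1$ otherwise. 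First I would dispose of the base case: if $P$ is compact then Vinberg's theorem recalled above gives $n\le4<13$, so I may assume $P$ is noncompact of finite volume and hence has at least one cusp.

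Next I would exploit the local structure at a cusp $v$, already described in the introduction: exactly $2(n-1)$ hyperfaces pass through $v$, forming $n-1$ pairs $\{F_i,F_i'\}$ of asymptotically parallel hyperfaces, with hyperfaces from different pairs adjacent (hence orthogonal). In Gram-matrix terms, after ordering these normals as $e_1,e_1',\dots,e_{n-1},e_{n-1}'$, the corresponding principal submatrix of $G$ is the block-diagonal matrix $\bigoplus_{i=1}^{n-1}\begin{pmatrix}1&-1\\-1&1\end{pmatrix}$, which is positive semidefinite of rank $n-1$ with an $(n-1)$-dimensional kernel. In parallel I would record two structural facts: (a) every hyperface $F$ of $P$ is again a right-angled polyhedron of finite volume, now in $\mathbb{H}^{n-1}$ (the dihedral angles along its ridges are inherited, and finiteness of volume is inherited); (b) every ideal vertex of such an $F$ is an ideal vertex of $P$, and the link of that vertex inside $F$ is an $(n-2)$-cube sitting inside the $(n-1)$-cube link at the corresponding cusp of $P$. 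By (a), if $\dim F\ge5$ then $F$ itself has cusps, so the cube structure propagates down through chains of faces.

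The decisive step is then a global counting/inertia argument. Using (a) and (b), one propagates the cube links across shared hyperfaces: for a given hyperface $F$, the hyperfaces asymptotically parallel to it are exactly the ``opposite'' hyperfaces at the cusps of $F$, so their number equals the number of cusps of $F$, while the hyperfaces adjacent to $F$ are precisely those cutting out the facets of $F$ as a polytope in $\mathbb{H}^{n-1}$. Assembling these local data, one produces a principal submatrix of $G$ whose sign pattern is highly constrained — many $2\times2$ blocks $\begin{pmatrix}1&-1\\-1&1\end{pmatrix}$, large all-zero blocks coming from orthogonality, and entries $\le-1$ elsewhere — and one bounds its numbers of positive, negative and zero eigenvalues from below in terms of $n$. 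Comparing with the fact that $G$ has signature $(n,1)$ (so at most $n$ positive and at most one negative eigenvalue) yields an inequality in $n$, and pushing the bookkeeping to the sharp constant gives $n\le12$; for $n\ge13$ the forced subconfiguration cannot be realized in $\mathbb{R}^{n,1}$.

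The main obstacle is exactly this last step: making the propagation of cube links across faces precise enough, and the attendant eigenvalue estimate sharp enough, to land on the exact threshold $13$ rather than a weaker bound. Concretely, the work is in (i) controlling how the $(n-1)$-cube at a cusp of $P$ meshes, through shared hyperfaces, with the $(n-2)$-cubes at the cusps of those hyperfaces, and in (ii) establishing a tight extremal bound for the inertia of a symmetric matrix carrying the above sign pattern, so that the signature constraint on $G$ becomes an equality-type obstruction at $n=13$. The lower-dimensional inputs — Vinberg's nonexistence of compact right-angled polyhedra (the base of the induction) and the fact from (a) that a hyperface of a right-angled polyhedron is again right-angled — are what let the induction terminate and what make the final inequality genuinely about the single parameter $n$.
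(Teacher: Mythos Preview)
The paper does not supply a proof of this theorem; it is quoted as Dufour's result with a citation, so there is no in-paper argument to compare against directly. For context, Dufour's proof (like the earlier Potyagailo--Vinberg bound $n\le 14$) runs through Nikulin's inequality (Theorem~\ref{thm:2-4} here) combined with the elementary lower bounds on face numbers of low-dimensional right-angled polyhedra such as those in Proposition~\ref{pro:2-2}: Nikulin bounds the averages $a_k^l$ from above, the structural inequalities bound them from below, and for $n\ge 13$ the two bounds collide.

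Your proposal takes a genuinely different route, attacking the signature of the Gram matrix directly rather than passing through Nikulin's combinatorial averaging. But the proposal has a real gap exactly where it needs to bite. You correctly identify the $2(n-1)\times 2(n-1)$ principal block at a cusp as $\bigoplus_{i=1}^{n-1}\left(\begin{smallmatrix}1&-1\\-1&1\end{smallmatrix}\right)$, positive semidefinite of rank $n-1$; this is perfectly compatible with signature $(n,1)$ and by itself yields no constraint on $n$. All of the content must therefore come from your ``propagation'' step (i) and the inertia estimate (ii), and neither is actually carried out: you describe how you would \emph{like} the cube links to mesh across hyperfaces, but you do not exhibit any explicit larger principal submatrix, nor any concrete eigenvalue count that depends on $n$ and fails at $n=13$. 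You yourself flag this as ``the main obstacle,'' which is an honest assessment---but it means that, as written, the proposal contains no argument bounding $n$ at all. I am also skeptical that a pure signature count can recover the sharp threshold $13$: the known proofs lean essentially on Nikulin's averaging inequality, which encodes global combinatorics of the whole face lattice, information that is not visible from any finite configuration of hyperface normals near a single cusp.
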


\begin{rem}
{\rm Before Dufour proved Theorem \ref{thm:1-1}, Potyagailo and Vinberg had already shown 
the nonexistence of right-angled polyhedra of finite volume in $\mathbb{H}^n$ for $n>14$ in \cite{ref:pv}. }
\end{rem}

\indent These results suggest that when the dimension of a right-angled polyhedron 
in the hyperbolic space becomes higher, then this polyhedron becomes far from compact. 
Therefore we may expect that the higher the dimension of a right-angled polyhedron is, 
the more cusps it has for $4<n<13$. 
Let $Q^n$ be a right-angled polyhedron of finite volume in $\mathbb{H}^n$. 
Denote the number of cusps of $Q^n$ by $c(Q^n)$. 
Our main theorem shows that this expectation is actually true. 

\begin{main}\label{main}
\textit{For $Q^n$, a right-angled polyhedron of finite volume in $\mathbb{H}^n$, we have the following lower bounds on the number of cusps $c(Q^n)$: 
\begin{eqnarray*}
& c(Q^6)\geq 3, \, c(Q^7)\geq 17, \, c(Q^8)\geq 36, \, c(Q^9)\geq 91, \\ 
& c(Q^{10})\geq 254, \, c(Q^{11})\geq 741, \, c(Q^{12})\geq 2200. 
\end{eqnarray*}}
\end{main}

\indent Potyagailo and Vinberg \cite{ref:pv} found some examples of right-angled polyhedra 
of finite volume in $\mathbb{H}^n$ for $n\leq 8$. 
However, we do not know whether there exist right-angled polyhedra of finite volume 
in $\mathbb{H}^n$ for $9\leq n\leq 12$. 
According to \cite{ref:ft}, there is no simple ideal Coxeter polyhedron in $\mathbb{H}^n$ 
with $n\geq 8$. Furthermore in \cite{ref:k}, it was shown that there is no right-angled 
polyhedron in $\mathbb{H}^n$ with $n\geq 7$. \\
\indent The key element of the proof of the main theorem 
is a lower bound on the number of 2-dimensional faces of $Q^3$, which we prove in Section $3$. 
Before doing so, we introduce some known results on right-angled polyhedra in $\mathbb{H}^n$. 

\section{Right-angled polyhedra in $\mathbb{H}^n$} 

\indent Let us denote by $\mathbb{H}^n$ the hyperbolic $n$-space. 
There are some ways to describe it. 
In what follows, we use two of them that we now explain below. 
Denote the unit open ball by
\begin{equation*}
B^n:=\{ x\in \mathbb{R}^n\mid \mid x\mid <1\}. 
\end{equation*} 
The metric space consisting of $B^n$ equipped with a Riemannian metric of the form 
\begin{equation*}
\left( \frac{2}{1-\mid x\mid ^2}\right) ^2 \sum_{i=1}^n dx_i^2
\end{equation*}
is called $the$ $conformal$ $ball$ $model$ of $\mathbb{H}^n$. 
Denote the upper half-space by
\begin{equation*}
U^n:=\{ (x_1, \dots , x_n)\in \mathbb{R}^n\mid x_n>0\}. 
\end{equation*}
The metric space consisting of $U^n$ together with the metric 
\begin{equation*}
\frac{1}{x_n^2}\sum_{i=1}^n dx_i^2
\end{equation*}
is called $the$ $upper$ $half$-$model$ of $\mathbb{H}^n$. 

\indent Let $P^n$ be a Coxeter polyhedron of finite volume in the hyperbolic $n$-space $\mathbb{H}^n$. 
The boundary of $\mathbb{H}^n$ is the $(n-1)$-sphere $\mathbb{S}^{n-1}$ 
when we consider the conformal ball model. 
Because the volume of $P^n$ is finite, 
the intersection of the Euclidean closure of $P^n$ and $\mathbb{S}^{n-1}$ is a finite set of points. 
Here, the Euclidean closure of a set $A\subset \mathbb{H}^n$ means the closure of $A$ in $\mathbb{R}^n$ 
contained in $B^n$. 
A cusp point (or a point at infinity, an ideal vertex) of $P^n$ is a point $c$ of $\overline{P^n}\cap \mathbb{S}^{n-1}$. 
We say that $P^n$ $has$ $a$ $cusp$ when there is a cusp point of $P^n$.  
Denote the set of $k$-dimensional faces of $P^n$ by $\Omega _k(P^n)$. 
The number of $k$-dimensional faces of $P^n$ is denoted by $a_k(P^n)$, 
and the number of cusps of $P^n$ is denoted by $c(P^n)$. 
The expression \lq \lq the faces $F_1, \cdots , F_k$ intersect\rq \rq \ means that 
the faces $F_1, \cdots , F_k$ have a common point. 
We say these faces $share$ $a$ $cusp$ or $have$ $a$ $common$ $cusp$ 
when their Euclidean closures have a common point at infinity. 

\indent Two hyperplanes of $\mathbb{H}^n$ are called $parallel $ 
if they do not intersect. 
We call that two hyperfaces $F_1$ and $F_2$ of $P^n$ are $parallel $ 
if the hyperplanes containing them are parallel 
and their Euclidean closures intersect in the boundary of $\mathbb{H}^n$. 
If two hyperfaces are parallel, 
then the intersection of their Euclidean closures is exactly one cusp of $P^n$. 

\indent Let $Q^n$ be a right-angled polyhedron of finite volume in $\mathbb{H}^n$. 
A right-angled polyhedron has some good properties stated as $($P1$)$, $($P2$)$ and $($P3$)$ below. 

\indent $($P1$)$ Any face of $Q^n$ is also a right-angled polyhedron. 

\indent $($P2$)$ For any hyperface of $Q^n$ passing through a cusp $c$, 
there is a unique other parallel hyperface sharing same cusp. 

\indent $($P3$)$ The number of hyperfaces of $Q^n$ sharing a cusp of $Q^n$ is exactly $2(n-1)$. 

These properties follow from the local combinatorial structure of right-angled polyhedra in $\mathbb{H}^n$. 
In what follows, $Q^n$ always denotes a right-angled polyhedron of finite volume in $\mathbb{H}^n$. 

\indent We denote by $\langle F \rangle $ the Euclidean closure of 
the hyperplane containing a hyperface $F$ of $Q^n$. 

\begin{prop}\label{pro:2-1} $(${\rm \cite[p.7]{ref:pv}}$)$ 
Let $F_1, F_2, \cdots $ be hyperfaces of a right-angled polyhedron. Then

\indent $(a)$ if $\langle F_1 \rangle $ and $\langle F_2 \rangle $ intersect, 
then $F_1$ and $F_2$ intersect; 

\indent $(b)$ if $F_1$, $F_2$, $F_3$ are pairwise mutually adjacent, 
then they meet at a $(n-3)$-dimensional face; 

\indent $(c)$ if $F_1$ and $F_2$ are parallel and $F_3$ is adjacent to them, 
then $F_1$, $F_2$ and $F_3$ meet at a cusp; 

\indent $(d)$ if $F_1$ and $F_2$ are parallel and $F_3$ and $F_4$ are adjacent to them, 
then $F_1$, $F_2$, $F_3$ and $F_4$ meet at a cusp. 
\end{prop}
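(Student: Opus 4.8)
The plan is to deduce all four statements from the geometry of the conformal ball model, using the observation that for a right-angled polyhedron the hyperplanes carrying the hyperfaces meet the boundary sphere in round spheres that are either disjoint or orthogonal, and that two hyperfaces are adjacent exactly when these spheres intersect (orthogonally), parallel exactly when the spheres are tangent. Part $(a)$ is essentially the defining local structure: since $Q^n$ is acute-angled and in particular almost simple, if $\langle F_1\rangle$ and $\langle F_2\rangle$ meet then the two hyperplanes cannot be parallel or ultraparallel, so they intersect inside $\mathbb{H}^n$; because all dihedral angles are $\pi/2$ there is no "partial" intersection, and the portion of $F_1\cap F_2$ cut out inside $Q^n$ is a genuine codimension-$2$ face. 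I would phrase this as: a right-angled polyhedron is a Coxeter polyhedron, so by Andreev/Vinberg theory any two of its hyperplanes that are not parallel or divergent actually bound a common face; the hypothesis that $\langle F_1\rangle$ and $\langle F_2\rangle$ intersect rules out the divergent case, and orthogonality rules out tangency.

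For part $(b)$, assume $F_1,F_2,F_3$ are pairwise adjacent. By $(a)$ each pair meets in an $(n-2)$-face. Work in the upper half-space model and normalize so that $F_1$ and $F_2$ are vertical hyperplanes meeting orthogonally along a vertical $(n-2)$-flat $L$. Since $F_3$ is orthogonal to both $F_1$ and $F_2$, its carrying hemisphere (or vertical hyperplane) must be invariant under the two reflections fixing $F_1$ and $F_2$; a short computation shows the only spheres orthogonal to both vertical hyperplanes through $L$ are centered on $L$, hence $F_3$ meets $L$ in an $(n-3)$-flat. That intersection lies in all three hyperfaces, and because the configuration is right-angled it lies in $Q^n$, giving the desired $(n-3)$-face. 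I would present the normalization explicitly, as it makes the orthogonality constraints transparent and the "main obstacle" — showing the triple intersection is nonempty and of the right dimension rather than empty or lower-dimensional — becomes a one-line linear-algebra fact.

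For parts $(c)$ and $(d)$, normalize in the upper half-space model so that the common cusp of the parallel pair $F_1,F_2$ is the point at infinity; then $\langle F_1\rangle$ and $\langle F_2\rangle$ are parallel vertical hyperplanes, say $\{x_1=0\}$ and $\{x_1=1\}$. A hyperface $F_3$ adjacent to both must have its carrying hemisphere orthogonal to both vertical walls, which forces that hemisphere to be a full hemisphere over a Euclidean sphere tangent to both walls — equivalently, the sphere has radius $1/2$ and is centered at $x_1=1/2$; such a hemisphere passes through $\infty$? No — rather, one checks the three Euclidean closures $\langle F_1\rangle,\langle F_2\rangle,\langle F_3\rangle$ have $\infty$ as their unique common boundary point, so $F_1,F_2,F_3$ share the cusp at $\infty$. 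For $(d)$, $F_3$ and $F_4$ are each constrained the same way; both are orthogonal to the two vertical walls, hence both carried by hemispheres over spheres of diameter the unit segment between the walls, so $\langle F_3\rangle$ and $\langle F_4\rangle$ also pass through $\infty$, and all four closures meet there. The key point in both cases is that orthogonality to two parallel walls is an extremely rigid condition in the half-space model; I expect the main subtlety is simply to verify that such an $F_3$ actually reaches the cusp (rather than being an ultraparallel sphere missing $\infty$), which is exactly where the hypothesis "$F_3$ is adjacent to $F_1$ and $F_2$" — not merely disjoint from neither — is used, together with (P2)–(P3) to rule out degenerate configurations.
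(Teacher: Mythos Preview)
The paper does not prove this proposition; it is simply quoted from Potyagailo--Vinberg \cite{ref:pv}. So there is no ``paper's own proof'' to compare against, but your argument can be checked directly, and it contains a genuine error in parts $(c)$ and $(d)$.

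You normalize correctly: with the cusp of the parallel pair $F_1,F_2$ at $\infty$, the carriers $\langle F_1\rangle=\{x_1=0\}$ and $\langle F_2\rangle=\{x_1=1\}$ are parallel vertical hyperplanes. But then you write that $\langle F_3\rangle$, being orthogonal to both walls, ``forces that hemisphere to be a full hemisphere over a Euclidean sphere tangent to both walls --- equivalently, the sphere has radius $1/2$ and is centered at $x_1=1/2$.'' This is the wrong condition. A hemisphere is \emph{orthogonal} to a vertical hyperplane precisely when its Euclidean center lies \emph{on} that hyperplane (equivalently, its north pole lies on the hyperplane). Tangency of the boundary spheres corresponds instead to the hyperbolic hyperplanes being \emph{parallel}, not adjacent. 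So a hemisphere orthogonal to both $\{x_1=0\}$ and $\{x_1=1\}$ would need its center to satisfy $x_1=0$ and $x_1=1$ simultaneously --- impossible. The conclusion is that $\langle F_3\rangle$ cannot be a hemisphere at all: it must itself be a vertical hyperplane (with normal orthogonal to $e_1$), and hence its closure contains $\infty$. That is the actual mechanism forcing $F_1,F_2,F_3$ to meet at the cusp, and the same reasoning handles $F_4$ in part $(d)$. The paper in fact spells out exactly this ``north pole'' argument in the $3$-dimensional case (Sublemma~\ref{sublem:3-4}): if $\langle B\rangle$ were a hemisphere orthogonal to the two vertical walls $\langle A\rangle,\langle A'\rangle$, both walls would pass through its north pole, contradicting their parallelism.

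Your sketches for $(a)$ and $(b)$ are along the right lines but are not yet proofs. For $(a)$ you invoke ``Andreev/Vinberg theory'' without saying which statement you mean; the content here is the standard fact for Coxeter polyhedra that if the bounding hyperplanes of two facets meet in $\mathbb{H}^n$ then the facets themselves are adjacent, and that deserves either a precise citation or a short argument. For $(b)$ your normalization is fine and the claim that a hemisphere orthogonal to two orthogonal vertical hyperplanes has its center on their common boundary $(n-2)$-flat is correct; what is missing is the step from $\langle F_1\rangle\cap\langle F_2\rangle\cap\langle F_3\rangle\neq\emptyset$ to $F_1\cap F_2\cap F_3$ being an actual face of $Q^n$, which again needs the structure theory of acute-angled polyhedra (every $k$-face is the intersection of the $n-k$ facets containing it).
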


\indent Potyagailo and Vinberg proved certain inequalities describing the relations 
between the number of cusps and faces of a right-angled polyhedron in $\mathbb{H}^n$. 

\begin{prop}\label{pro:2-2} $(${\rm \cite[p.7, 8]{ref:pv}}$)$ 
Let $Q^n$ be a right-angled polyhedron of finite volume in $\mathbb{H}^n$. 
Then the following inequalities hold: 
\begin{eqnarray*}
a_1(Q^2)+c(Q^2) \geq 5,\ \indent a_2(Q^3) \geq 6, \ \indent a_2(Q^3)+2c(Q^3) \geq 12. 
\end{eqnarray*}
\end{prop}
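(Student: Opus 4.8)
The plan is to handle the three inequalities in order, the first by a Gauss--Bonnet area computation and the remaining two by feeding that first inequality into Euler's formula together with the local structure statements (P1)--(P3).

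\textbf{Step 1 (the planar inequality).} Here $Q^2$ is a finite-area convex polygon in $\mathbb{H}^2$ with $a_1:=a_1(Q^2)$ sides and hence $a_1$ vertices, of which $c:=c(Q^2)$ are ideal (interior angle $0$) and $a_1-c$ are ordinary (interior angle $\pi/2$, since every dihedral angle of a right-angled polygon is $\pi/2$). Gauss--Bonnet gives
\[
\mathrm{area}(Q^2)=(a_1-2)\pi-(a_1-c)\tfrac{\pi}{2}>0,
\]
and rearranging yields $a_1+c>4$, i.e.\ $a_1+c\ge 5$.

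\textbf{Step 2 (the third inequality for $Q^3$).} By (P1) every $2$-face $F$ of $Q^3$ is again a finite-area right-angled polygon, so Step 1 gives $a_1(F)+c(F)\ge 5$. Summing over all $a_2:=a_2(Q^3)$ faces and using that each edge lies on exactly two faces (so $\sum_F a_1(F)=2a_1$, writing $a_1:=a_1(Q^3)$) and that, by (P3), each cusp lies on exactly $2(3-1)=4$ faces (so $\sum_F c(F)=4c$, writing $c:=c(Q^3)$), we obtain $2a_1+4c\ge 5a_2$. I then record two exact relations: counting edge-ends at vertices gives $3v+4c=2a_1$, where $v$ is the number of ordinary vertices (each ordinary vertex meets $3$ faces and $3$ edges, while each ideal vertex has a Euclidean right-angled cross-section, which must be a rectangle, hence meets $4$ faces and $4$ edges); and Euler's formula for the boundary $2$-sphere, with ideal vertices adjoined as genuine vertices, gives $(v+c)-a_1+a_2=2$. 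Eliminating $a_1$ and $v$ from the two exact relations turns the inequality $2a_1+4c\ge 5a_2$ into $a_2+2c\ge 12$.

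\textbf{Step 3 (the bound $a_2(Q^3)\ge 6$).} The elimination in Step 2 produces $v=2a_2-2c-4$, and $v\ge 0$ forces $a_2-c\ge 2$; doubling this and adding it to $a_2+2c\ge 12$ gives $3a_2\ge 16$, hence $a_2\ge 6$.

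The only delicate point is the justification underlying Step 2: that $\overline{Q^3}$ with its ideal vertices adjoined is genuinely a polyhedral $2$-sphere (so that Euler's formula applies with those vertices counted) and that each ideal vertex really contributes exactly $4$ both to the face count and to the edge count -- this is precisely where (P2), (P3), and the fact that a Euclidean right-angled polygon is a rectangle are used. As an alternative to the $v\ge 0$ argument in Step 3, one could instead rule out the combinatorial types with $4$ and $5$ faces (tetrahedron, square pyramid, triangular prism) directly from Proposition \ref{pro:2-1}: in each of these types some face would be adjacent to two faces that are parallel and meet only at a single ideal vertex not lying on that face, contradicting part (c).
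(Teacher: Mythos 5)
Your main line of argument is correct and complete. Note that the paper does not actually prove Proposition \ref{pro:2-2}: it is quoted from Potyagailo--Vinberg \cite{ref:pv}, and the only comment offered in the text is that the first inequality is tied to Lemma \ref{lem:2-3}. Your Gauss--Bonnet computation for $Q^2$ (area $=(a_1-2)\pi-(a_1-c)\tfrac{\pi}{2}>0$, hence $a_1+c\ge 5$), the double count $2a_1+4c\ge 5a_2$ over the $2$-faces of $Q^3$ (each edge on exactly two faces, each cusp on exactly four faces by (P3)), the edge-end count $2a_1=3v+4c$ (three edges at an ordinary vertex by almost-simplicity, four at a cusp since the cusp cross-section is a Euclidean rectangle), and Euler's formula for the boundary sphere do combine exactly as you say to give $a_2+2c\ge 12$, and $v=2a_2-2c-4\ge 0$ then yields $a_2\ge 6$. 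This is essentially the standard argument of \cite{ref:pv}, so you have in effect reconstructed the cited proof rather than found a new route. One caveat: the alternative you sketch at the end of Step 3 does not work as stated. In the tetrahedron all four faces are pairwise adjacent, so there is no pair of parallel faces to which Proposition \ref{pro:2-1}(c) could be applied; and for the square pyramid and the triangular prism you would first have to show that the two non-adjacent faces actually meet at infinity rather than being divergent, which is not automatic. Since this is offered only as an alternative to your (valid) $v\ge 0$ argument, it does not affect the correctness of the proof, but it should be dropped or repaired if retained.
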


The first inequality follows from the following obvious lemma. 

\begin{lem}\label{lem:2-3} 
If $Q^2$ is compact, then $Q^2$ has more than four edges. 
\end{lem}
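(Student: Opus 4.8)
The plan is to argue via the Gauss--Bonnet theorem for hyperbolic polygons. Suppose $Q^2$ is a compact right-angled polygon in $\mathbb{H}^2$, and let $k=a_1(Q^2)$ be its number of edges; since $Q^2$ is compact and convex, it has exactly $k$ vertices, all of which are ordinary (no cusps), and at each vertex the interior angle equals $\pi/2$ because $Q^2$ is right-angled.

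Next I would invoke the Gauss--Bonnet formula, which for a compact convex polygon in $\mathbb{H}^2$ with interior angles $\theta_1,\dots,\theta_k$ gives
\begin{equation*}
\mathrm{area}(Q^2)=(k-2)\pi-\sum_{i=1}^{k}\theta_i .
\end{equation*}
Substituting $\theta_i=\pi/2$ for each $i$ yields
\begin{equation*}
\mathrm{area}(Q^2)=(k-2)\pi-\frac{k\pi}{2}=\left(\frac{k}{2}-2\right)\pi .
\end{equation*}

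Finally, since $Q^2$ has positive (and finite) area, the right-hand side must be strictly positive, which forces $k/2-2>0$, i.e.\ $k>4$. Hence a compact right-angled polygon in $\mathbb{H}^2$ has at least five edges, proving the lemma. There is no real obstacle here: the only point to be a little careful about is that compactness guarantees all vertices are genuine vertices with angle exactly $\pi/2$ (so that the angle sum is precisely $k\pi/2$ and Gauss--Bonnet applies in its closed-polygon form); this is exactly why the hypothesis ``$Q^2$ is compact'' is needed, and it is what makes the statement ``obvious'' as claimed in the text.
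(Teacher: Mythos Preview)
Your proof is correct and is exactly the standard Gauss--Bonnet argument one would expect here. The paper does not actually supply a proof of this lemma---it simply labels the statement ``obvious''---so your write-up fills in precisely the computation the authors had in mind.
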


\indent On the other hand, Nikulin considered 
the average number of $k$-dimensional faces in $l$-dimensional faces of an acute-angled polyhedron $P$ 
in $\mathbb{H}^n$, denoted by $a_k^l(P)$: 
\begin{equation*}
a_k^l(P)=\frac{1}{a_k(P)}\sum_{F\in \Omega ^k(P)}a_l(F). 
\end{equation*}
One of the main ingredients for proving our main theorem is the following Nikulin's inequality:

\begin{thm}\label{thm:2-4} 
Let $P$ be an acute-angled polyhedron of finite volume in $\mathbb{H}^n$. Then 
\begin{equation*}
a_k^l(P)
<\binom{n-l}{n-k}\frac{\binom{[\frac{n}{2}]}{l}+\binom{[\frac{n+1}{2}]}{l}}{\binom{[\frac{n}{2}]}{k}+\binom{[\frac{n+1}{2}]}{k}}
\end{equation*}
holds for $l<k\leq [\frac{n}{2}] $. 
\end{thm}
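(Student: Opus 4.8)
The plan is to establish Nikulin's inequality by a counting argument over flags of faces, combining it with a known sharp estimate on the combinatorics of simple (or almost simple) polytopes. The quantity $a_k^l(P)$ averages, over all $k$-faces $F$, the number of $l$-faces contained in $F$. Since every $l$-face of $P$ is contained in some $k$-face (here $l<k$), a double count of incident pairs $(G,F)$ with $G\in\Omega_l(P)$, $F\in\Omega_k(P)$, $G\subset F$ gives
\begin{equation*}
a_k(P)\,a_k^l(P)=\sum_{F\in\Omega_k(P)}a_l(F)=\sum_{G\in\Omega_l(P)}\#\{F\in\Omega_k(P): G\subset F\}.
\end{equation*}
Because $P$ is acute-angled of finite volume, it is simple at edges and its finite vertices are simple, so each face $G$ of dimension $l$ is cut out by exactly $n-l$ hyperfaces, and the faces of $P$ containing $G$ correspond bijectively to subsets of those $n-l$ hyperfaces; in particular the number of $k$-faces through a fixed $l$-face is exactly $\binom{n-l}{n-k}$. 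Hence the right-hand sum equals $\binom{n-l}{n-k}a_l(P)$, and the identity reduces to
\begin{equation*}
a_k^l(P)=\binom{n-l}{n-k}\frac{a_l(P)}{a_k(P)}.
\end{equation*}

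So everything comes down to bounding the ratio $a_l(P)/a_k(P)$ from above by
\begin{equation*}
\frac{\binom{[\frac n2]}{l}+\binom{[\frac{n+1}2]}{l}}{\binom{[\frac n2]}{k}+\binom{[\frac{n+1}2]}{k}}.
\end{equation*}
The numerator $\binom{[n/2]}{l}+\binom{[(n+1)/2]}{l}$ is exactly the number of $l$-faces of the cyclic polytope $C(n+?,n)$ appearing in the lower-bound side of the Upper Bound Theorem, equivalently the $l$-face count attained by the "most economical" simple $n$-polytope with a minimal number of facets in the relevant range. The strategy is therefore to invoke the Lower Bound Theorem type estimate: among simple $n$-polytopes, the $g$-theorem/Barnette-style inequalities force $a_l(P)$ to grow at least like, and $a_k(P)$ to be controlled by, these binomial expressions, with the strict inequality coming from the fact that $P$ has at least one cusp (it is not compact, hence not combinatorially the boundary complex that would give equality — an ideal vertex contributes "missing" incidences). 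Concretely, I would: (i) pass to the dual simplicial complex $\partial P^*$, which is a simplicial $(n-1)$-sphere (or near-sphere, accounting for ideal vertices); (ii) apply the Dehn–Sommerville relations together with the generalized lower bound inequalities $g_i\ge 0$ for $i\le [n/2]$ to the $h$-vector; (iii) translate back to $f$-vector inequalities, which yields precisely $a_l(P)\le$ (numerator-type bound) and $a_k(P)\ge$ (denominator-type bound) for $l<k\le[n/2]$; (iv) check that at least one inequality is strict because of the cusp.

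The main obstacle will be step (iii)–(iv): making the strictness rigorous. Equality in the $g_i\ge 0$ bounds characterizes stacked spheres, and one must argue that the boundary complex of an acute-angled finite-volume hyperbolic polytope with a cusp can never be stacked in the relevant degree — this is where the hyperbolic geometry (via property (P3), that $2(n-1)$ hyperfaces meet at each cusp, forcing a rich local link around the ideal vertex) must be fed in to rule out the extremal combinatorial type. A secondary subtlety is the bookkeeping for ideal vertices: $\partial P$ is not literally a sphere, so one should either truncate each cusp (replacing the ideal vertex by a Euclidean $(n-1)$-face, which only increases some $a_l$) or work with the appropriate relative/manifold-with-boundary version of Dehn–Sommerville; I would handle this by truncation and then track how the face numbers change, verifying the inequality survives. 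Once the ratio bound is in place, substituting into the displayed identity for $a_k^l(P)$ gives the theorem.
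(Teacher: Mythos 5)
The paper does not actually prove Theorem \ref{thm:2-4}; it quotes it from Nikulin \cite{ref:nik} (simple convex polyhedra) and Khovanskij \cite{ref:kh} (polyhedra simple at edges), so there is no internal proof to compare against. Your opening reduction is correct and is the standard first step: since every $l$-face of an acute-angled finite-volume polyhedron lies in exactly $n-l$ hyperfaces, it lies in exactly $\binom{n-l}{n-k}$ faces of dimension $k$, and double counting yields $a_k^l(P)=\binom{n-l}{n-k}\,a_l(P)/a_k(P)$, so the theorem reduces to a bound on the ratio $a_l(P)/a_k(P)$.

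The rest of the proposal has a genuine gap and two misconceptions. First, you cannot bound $a_l(P)$ above and $a_k(P)$ below \emph{separately} by the two binomial expressions: those are absolute constants, while $a_l(P)$ and $a_k(P)$ are unbounded over the class of polyhedra considered (take polyhedra with many hyperfaces). The actual mechanism is a mediant (weighted-average) inequality on the $h$-vector of the dual boundary complex: using the Dehn--Sommerville relations one writes $a_j(P)=\frac12\sum_i\bigl(\binom{i}{j}+\binom{n-i}{j}\bigr)h_i$ with $h_i\geq 0$, and bounds $a_l/a_k$ by the maximum over $i$ of the coefficient ratio, which is attained at $i=[\frac n2]$; only nonnegativity of the $h$-vector is needed, not the generalized lower bound inequalities $g_i\geq 0$ that you invoke, and the unproved claim that this maximum sits at $i=[\frac n2]$ is itself a lemma your sketch omits. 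Second, the strictness is not caused by a cusp: the theorem is applied in this very paper to \emph{compact} polyhedra (it is what drives Proposition \ref{pro:2-3}), and Nikulin's original statement concerns compact simple polytopes; strictness comes from $h_0=1>0$ together with the coefficient ratio at $i=0$ being strictly smaller than its maximum in the relevant range. Third, the binomial expressions are not cyclic-polytope face counts but the symmetrized coefficients of $h_{[n/2]}$ in the $h$-to-$f$ transformation. Your truncation of the cusps is the right instinct for the non-compact case (it is essentially how Khovanskij extends Nikulin's argument to polytopes simple at edges), but verifying that the inequality survives truncation is precisely the content of that extension and is left entirely open in your sketch.
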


This inequality is proved by Nikulin \cite{ref:nik} for simple convex polyhedra 
and a generalization is due to Khovanskij \cite{ref:kh} for polyhedra simple at edges. 

By Nikulin's inequality and Lemma \ref{lem:2-3}, we obtain the following proposition, 
as explained in \cite{ref:vin}. 

\begin{prop}\label{pro:2-3}  
There are no compact right-angled polyhedra in $\mathbb{H}^n$ for $n>4$. 
\end{prop}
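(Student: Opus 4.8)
The plan is to combine Nikulin's inequality (Theorem \ref{thm:2-4}) with the rigid local combinatorics of right-angled polyhedra to derive a contradiction from compactness. Suppose $Q^n$ is a compact right-angled polyhedron in $\mathbb{H}^n$ with $n > 4$. Since $Q^n$ is compact, it has no cusps, so every vertex is an ordinary vertex and, by the almost-simple property, $Q^n$ is a simple polytope; moreover each $2$-face is itself a compact right-angled polyhedron, i.e.\ a compact right-angled polygon in $\mathbb{H}^2$, so by Lemma \ref{lem:2-3} every $2$-face has at least $5$ edges, hence at least $5$ vertices. This gives the lower bound $a_0^2(Q^n) \geq 5$, where $a_0^2$ is the average number of vertices in a $2$-face.

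Next I would play this lower bound against the upper bound Nikulin's inequality gives for $a_0^2$. Applying Theorem \ref{thm:2-4} with $l = 0$ and $k = 2$ (valid as long as $2 \leq [n/2]$, i.e.\ $n \geq 4$), we get
\begin{equation*}
a_0^2(Q^n) < \binom{n}{n-2}\,\frac{\binom{[\frac{n}{2}]}{0}+\binom{[\frac{n+1}{2}]}{0}}{\binom{[\frac{n}{2}]}{2}+\binom{[\frac{n+1}{2}]}{2}} = \binom{n}{2}\cdot\frac{2}{\binom{[\frac{n}{2}]}{2}+\binom{[\frac{n+1}{2}]}{2}}.
\end{equation*}
I would then show that for all $n \geq 5$ the right-hand side is at most $5$ (in fact it decreases past $5$ right around $n = 5$), which contradicts $a_0^2(Q^n) \geq 5$. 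Concretely one checks the cases $n = 5, 6$ by hand and then verifies that the right-hand side is monotonically decreasing in $n$: the numerator grows like $n^2/2$ while the denominator grows like $n^2/4$ in the squared term but the ratio $\binom{n}{2} \big/ \big(\binom{[n/2]}{2}+\binom{[(n+1)/2]}{2}\big)$ tends to $2$, so the whole expression tends to $1$, and a short estimate confirms it never climbs back above $5$ for $n \geq 5$.

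The main obstacle is the arithmetic verification that the Nikulin bound on $a_0^2$ drops below $5$ exactly at the threshold $n = 5$ and stays there; this requires separating the even and odd cases because of the floor functions $[n/2]$ and $[(n+1)/2]$, and handling the small cases ($n = 5$ and $n = 6$) explicitly since the asymptotic argument alone is not tight enough near the boundary. Once that inequality is in hand the contradiction is immediate, and the proposition follows. It is worth noting that the argument uses only the $l = 0$, $k = 2$ instance of Nikulin's inequality together with the polygon estimate of Lemma \ref{lem:2-3}; this is precisely the combination Vinberg attributes to this setting in \cite{ref:vin}.
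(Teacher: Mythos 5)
Your proposal is correct and is exactly the paper's (only sketched) argument: the paper derives Proposition \ref{pro:2-3} by combining Nikulin's inequality for the average number of vertices of $2$-faces with Lemma \ref{lem:2-3}, following Vinberg, and your computation that the bound equals $5$ at $n=5,6$ and then stays below $5$ is the right verification. Two harmless slips: in the paper's notation the quantity is $a_2^0(Q^n)$ rather than $a_0^2(Q^n)$, and the right-hand side tends to $4$ (not $1$) as $n\to\infty$, though all that matters is that it never exceeds $5$ for $n\geq 5$.
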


This proposition shows that $Q^n$ must have at least one cusp for $n\geq 5$. 

Fix a cusp $c$. Denote the set of all $k$-dimensional faces which contain $c$ by $\Omega ^c_k(Q^n)$. 
As we have mentioned in the introduction, 
the number of elements of this set is $2(n-1)$. 
In the next section, we focus on the case $n=3$. 

\section{A lower bound on the number of $2$-dimensional faces of $Q^3$}

In this section, we consider a right-angled polyhedron $Q^3$ in the upper half-model. 
In this model, the hyperplane must be a vertical Euclidean plane 
or an upper hemisphere which intersects the boundary of the upper half-space 
orthogonally. 
We assume that $Q^3$ has a cusp $c$. 
And we also assume that $c$ is the point at infinity of the upper half-space. 
The set $\Omega _2^c(Q^3)$ has exactly four elements: $A_0$, $A_1$, $A_2$, $A_3$. 
In this case, the $2$-dimensional faces $A_0$, $A_1$, $A_2$ and $A_3$ are parts of 
vertical Euclidean planes. 
We assume that $A_0$ and $A_2$ are parallel, and that $A_1$ and $A_3$ are parallel. 

\indent The aim of this section is to prove the following lemma. 

\begin{lem}\label{lem:3-1} 
If $Q^3$ has only one cusp, then $Q^3$ has at least twelve
$2$-dimensional faces. 
Moreover, if $Q^3$ has exactly twelve $2$-dimensional faces, 
then, after replacing $A_0$ with $A_1$ and $A_2$ with $A_3$, if necessary,  
$A_0$ and $A_2$ are quadrangles, and $A_1$ and $A_3$ are pentagons. 
\end{lem}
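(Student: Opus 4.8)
The plan is to set up coordinates in the upper half-space model with the cusp $c$ at infinity, so that the four faces $A_0, A_1, A_2, A_3$ through $c$ become four vertical Euclidean half-planes, with $A_0 \parallel A_2$ and $A_1 \parallel A_3$. Projecting to the boundary horosphere (a Euclidean plane), these four faces project to four lines bounding a Euclidean rectangle $R$; the remaining faces of $Q^3$ project to a collection of disjoint closed disks (the upper hemispheres) sitting inside $R$, and the right-angled condition forces each such disk to meet $\partial R$ orthogonally and any two intersecting disks to meet orthogonally. Thus the combinatorics of $Q^3$ away from $c$ is encoded by a packing of orthogonal circles inside a rectangle, and $a_2(Q^3) = 4 + (\text{number of these disks})$. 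So it suffices to show that at least eight disks are required, and that eight disks force the stated quadrilateral/pentagon pattern.

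First I would record the basic local constraints coming from Proposition~\ref{pro:2-1} and the right-angled structure: each hemisphere-face $F$ is itself a compact right-angled polygon in $\mathbb{H}^2$, hence by Lemma~\ref{lem:2-3} has at least five edges; each of the four vertical faces $A_i$ is a right-angled polygon with exactly one cusp (at $c$), hence is noncompact and its finite edges correspond to the hemispheres adjacent to it, so each $A_i$ carries at least some minimum number of hemisphere-neighbors. Next I would exploit the "no compact $Q^3$ with $\le 4$ edges" type counting more carefully: since $A_0$ and $A_2$ are parallel and each $A_1, A_3$ is adjacent to both (by Proposition~\ref{pro:2-1}(d) the four meet at $c$), and since each $A_i$ must be glued to hemispheres along all of its non-vertical edges, I would count edge-incidences. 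A hemisphere $F$ lies in $R$; the sides of $R$ that $F$ touches are among its edges, and $F$ has $\ge 5$ edges total, so $F$ is adjacent to at least $5 - (\text{number of sides of }R\text{ it touches}) \ge 3$ other hemispheres (a disk can touch at most two sides of a rectangle, and touching two forces it into a corner). Combining: the total number of hemispheres $N$ and the adjacency graph among them must satisfy a degree bound (average degree $\ge$ something like $3$) while also fitting in a rectangle, and a short case analysis on small $N$ ($N = 1, \dots, 7$) rules each out, giving $N \ge 8$ and hence $a_2(Q^3) \ge 12$.

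For the equality case $N = 8$ (so $a_2(Q^3) = 12$), the plan is to push the counting to rigidity: with exactly eight hemispheres, the inequalities above must be essentially tight, which pins down how many hemispheres touch each side of $R$ and how the eight disks are distributed. I expect the tight configuration to be forced to be (up to the allowed relabeling) the one in which the four hemispheres in the "corners" interact with three of the $A_i$ while the remaining structure makes two opposite vertical faces meet four hemisphere-edges and two opposite ones meet five. Translating edge counts back: $A_0$ and $A_2$ end up with $4$ edges (a cusp vertex plus three hemisphere edges, but the cusp accounts for two vertical edges so the polygon is a quadrilateral) and $A_1, A_3$ end up as pentagons. I would verify there is no other tight distribution by checking the handful of integer solutions to the incidence equations.

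The main obstacle I anticipate is the equality analysis: showing $N \ge 8$ is a finite combinatorial check once the orthogonal-circle-packing-in-a-rectangle picture is in place, but proving that $N = 8$ forces \emph{exactly} the quadrangle/quadrangle/pentagon/pentagon pattern (and not some other distribution of edges among $A_0, \dots, A_3$ and the hemispheres) requires ruling out all competing tight configurations, and this is where a careful but slightly delicate enumeration — keeping track of which disks sit in corners of $R$, which touch one side, and which touch none — is needed. A secondary subtlety is making sure the "project to the boundary" dictionary is airtight: that distinct hemisphere-faces really do project to disjoint disks and that adjacency upstairs corresponds exactly to orthogonal tangency/intersection downstairs, which follows from Proposition~\ref{pro:2-1}(a) and the right-angled hypothesis but should be stated explicitly.
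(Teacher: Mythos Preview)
Your setup---placing the cusp at infinity in the upper half-space model so that $A_0,\dots,A_3$ become vertical half-planes bounding a rectangle and the remaining faces become hemispheres---is exactly the paper's starting point. The observation that each compact face has at least five edges (Lemma~\ref{lem:2-3}) and that a hemisphere can be orthogonal to at most two (necessarily adjacent) sides of the rectangle is also correct and is essentially Sublemmas~\ref{sublem:3-3}--\ref{sublem:3-4}.

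The genuine gap is that your toolkit---Lemma~\ref{lem:2-3}, Proposition~\ref{pro:2-1}, and degree counting---is not enough to carry out the case analysis. The paper's proof hinges on two further constraints extracted from Andreev's theorem (Theorem~\ref{thm:And}(c) and (e)): no three $2$-faces may be pairwise adjacent without sharing a vertex, and no four $2$-faces other than $A_0,\dots,A_3$ may be cyclically adjacent. These are the paper's Conditions~(4) and~(5), and they do essentially all of the work in eliminating small configurations. For instance, ruling out eleven faces (your $N=7$, the paper's case~3.2 with $m=9$) requires invoking both conditions several times to exclude specific adjacency patterns among the $B_i$ and the auxiliary faces $F,F'$; degree bounds alone do not force a contradiction there. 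Likewise, in the twelve-face equality analysis the paper uses these prismatic-circuit conditions to kill subcases~\ref{sec:2-3-1} and~\ref{sec:2-3-2} and to isolate~\ref{sec:2-3-3} as the unique surviving pattern. Your proposal never invokes Andreev, and Proposition~\ref{pro:2-1} does not supply these circuit constraints.

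A secondary point: the analysis is not short. The paper's argument runs through six main cases on the total vertex count of $A_0,\dots,A_3$ (eight through more-than-twelve), with the ten-vertex case splitting into three subcases, and each step is a hands-on adjacency chase. Calling it ``a short case analysis on small $N$'' understates what is needed. (Also, a minor slip: the hemispheres do not project to \emph{disjoint} disks---adjacent faces give orthogonally intersecting circles, as you yourself note a line later.)
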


\begin{figure}[htp]
\begin{center}
\includegraphics[width=10cm]{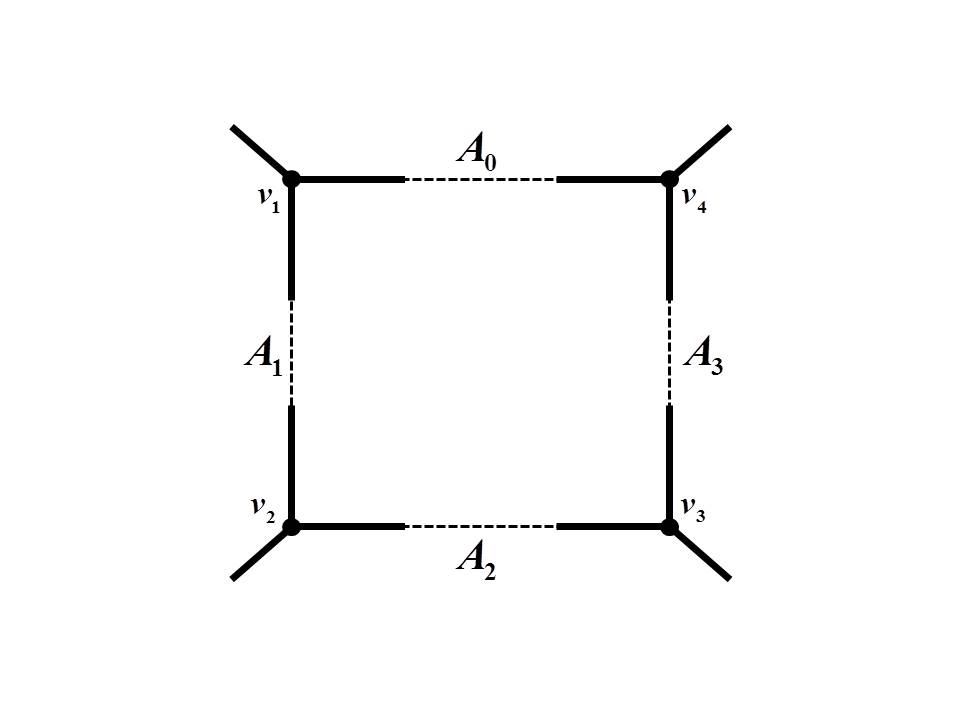}
\end{center}
\caption{Combinatorial structure of $Q^3$}
\label{fig:1}
\end{figure}

Assume that $c(Q^3)=1$, and denote this cusp by $c$. 
There are exactly four $2$-dimensional faces which share $c$. 
Denote these $2$-dimensional faces by $A_0$, $A_1$, $A_2$, $A_3$ as before. 
Because the faces $A_0$ and $A_1$ are adjacent, they have a common edge.
This edge starts from a vertex and terminates at $c$. We denote this
vertex by $v_1$. 
In the same manner, we denote by $v_2$ $($resp. $v_3$, $v_4)$ 
the vertex belonging to $A_1$ and $A_2$ $($resp. $A_2$ and $A_3$, $A_3$ and $A_0)$ 
which is an endpoint of an edge terminating at $c$. 
Then the local combinatorial structure of $Q^3$ around $c$ can 
be depicted as in Fig. \ref{fig:1}.

Each vertex $v_1$, $v_2$, $v_3$ and $v_4$ belongs to 
two non-compact $2$-dimensional faces and one compact 
$2$-dimensional face of $Q^3$ because $Q^3$ is almost simple and has only one cusp. 

Since $Q^3$ is a right-angled polyhedron in $\mathbb{H}^3$, it must satisfy the following five conditions: 

$($1$)$ Any compact $2$-dimensional face must have more than four edges. 

$($2$)$ If two $2$-dimensional faces are adjacent, then they have only one common edge. 

$($3$)$ Any vertex must be shared by exactly three edges. 

$($4$)$ There are no three $2$-dimensional faces which are pairwise adjacent but do not share a vertex. 

$($5$)$ There are no four $2$-dimensional faces which are cyclically adjacent 
except for $A_0$, $A_1$, $A_2$ and $A_3$. 

Condition $($1$)$ comes from Lemma \ref{lem:2-3}, and Conditions $($2-3$)$ 
can be seen from the local combinatorial structure of a right-angled 
polyhedron in $\mathbb{H}^3$. 
We use the following theorem due to Andreev to obtain Conditions $($4-5$)$. 

\begin{thm}\label{thm:And} $(${\rm \cite{ref:and}}$)$
An acute-angled almost simple polyhedron of finite volume with given dihedral angles, 
other than a tetrahedron or a triangular prism, exists in $\mathbb{H}^3$ 
if and only if the following conditions are satisfied$:$ 

$(a)$ if three $2$-dimensional faces meet at a vertex or a cusp, then the sum of the dihedral angles 
between them is at least $\pi $ $( \pi $ for a cusp$);$ 

$(b)$ if four $2$-dimensional faces meet at a vertex or a cusp, then all the 
dihedral angles between them equal $\frac{\pi}{2} ;$ 

$(c)$ if three $2$-dimensional faces are pairwise adjacent but share neither a vertex nor a cusp, 
then the sum of the dihedral angles between them is less than $\pi ;$ 

$(d)$ if a $2$-dimensional face $F_i$ is adjacent to $2$-dimensional faces $F_j$ and $F_k$, 
while $F_j$ and $F_k$ are not adjacent but have a common cusp which $F_i$ 
does not share, then at least one of the angles formed by 
$F_i$ with $F_j$ and with $F_k$ is different from $\frac{\pi}{2} ;$ 

$(e)$ if four $2$-dimensional faces are cyclically adjacent but meet at neither a vertex nor a cusp, 
then at least one of the dihedral angles between them is different from $\frac{\pi}{2}$. 
\end{thm}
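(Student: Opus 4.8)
The statement is Andreev's theorem, a biconditional characterizing when an abstract almost simple polyhedron with prescribed non-obtuse dihedral angles is realized in $\mathbb{H}^3$. The plan is to prove the two directions separately: first the necessity of conditions $(a)$--$(e)$ for an existing polyhedron, and then their sufficiency for existence, with uniqueness (rigidity) falling out of the same argument.

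\textbf{Necessity.} Given an acute-angled almost simple $P\subset\mathbb{H}^3$, I would read off $(a)$--$(e)$ from local and global geometry. Conditions $(a)$ and $(b)$ are constraints on the links of vertices and cusps: the link of a finite vertex is a small spherical polygon and the link of a cusp is a Euclidean polygon, so their angle sums are governed by spherical and Euclidean Gauss--Bonnet respectively. Non-obtuseness forces trivalence at finite vertices, giving a spherical triangle whose angle sum exceeds $\pi$, while the Euclidean (flat) condition at a cusp forces exactly four faces meeting orthogonally. Conditions $(c)$--$(e)$ are the ``prismatic'' conditions, derived from the trigonometry of the configuration of supporting planes. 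The key dichotomy is that three planes meeting pairwise at angles $\alpha,\beta,\gamma$ share a genuine vertex precisely when $\alpha+\beta+\gamma>\pi$ and otherwise admit a common perpendicular plane; since in $(c)$ the three pairwise-adjacent faces share neither vertex nor cusp, the sum must be $<\pi$. Conditions $(d)$ and $(e)$ are derived analogously, ruling out configurations that would force two disjoint planes to intersect, i.e. a degenerate prism.

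\textbf{Sufficiency and uniqueness via continuity.} Fix the combinatorial type $C$ (an abstract trivalent-at-finite, $4$-valent-at-ideal $3$-polytope, excluding the tetrahedron and triangular prism). Let $\mathcal{P}_C$ be the space of hyperbolic realizations with non-obtuse angles, and let $A_C\subset(0,\pi/2]^{E}$ be the set of angle assignments satisfying $(a)$--$(e)$, where $E$ indexes the edges of $C$. By the necessity direction the dihedral-angle map $\alpha:\mathcal{P}_C\to A_C$ is well defined. The plan is to show $\alpha$ is a homeomorphism through three steps: (i) establish that $A_C$ is nonempty and connected; (ii) prove $\alpha$ is a local homeomorphism, via infinitesimal rigidity, that is, a Cauchy/Schl\"afli-type argument showing the differential of $\alpha$ is an isomorphism, equivalently that acute-angled polyhedra of type $C$ are infinitesimally rigid; (iii) prove $\alpha$ is proper, so that a sequence in $\mathcal{P}_C$ whose angles converge to a point of $A_C$ subconverges in $\mathcal{P}_C$. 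Granting (i)--(iii), $\alpha$ is a proper local homeomorphism onto the connected set $A_C$, hence a covering map; combined with a seed example and a degree/injectivity argument this forces $\alpha$ to be a bijection. Thus every admissible angle assignment is realized, and the realization is unique.

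\textbf{Main obstacle.} The hard step is properness (iii), the degeneration analysis. One must show that while the angles stay bounded away from the walls violating $(a)$--$(e)$, the polyhedra cannot collapse: no face shrinks to a point, no edge length tends to $0$ or $\infty$, and the polyhedron does not escape to infinity. This is exactly where $(c)$--$(e)$ do their work, as they are the precise inequalities obstructing the degenerate limits; carrying this out requires a case analysis of the possible failure modes, translating each into the violation of one of the conditions. Secondary obstacles are setting up the infinitesimal rigidity of (ii) cleanly in the cusped case, where the Schl\"afli formula and the volume functional must be treated with ideal vertices, and handling the excluded tetrahedron and triangular prism by direct computation, since for these $A_C$ and the rigidity statement behave exceptionally.
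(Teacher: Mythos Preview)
The paper does not prove this statement at all: Theorem~\ref{thm:And} is quoted as a black box from Andreev's paper \cite{ref:and} and is used only as a tool (to derive Conditions~(4) and~(5) for $Q^3$, and later to invoke part~$(e)$ in Section~4). There is therefore nothing to compare your argument against in this paper.

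That said, your outline is a reasonable sketch of the standard continuity-method proof of Andreev's theorem (as in Andreev's original work and the later treatment by Roeder--Hubbard--Dunbar): necessity from vertex/cusp links and prismatic trigonometry, sufficiency by showing the dihedral-angle map $\alpha:\mathcal{P}_C\to A_C$ is a proper local homeomorphism onto a connected target. The identification of properness as the crux, with the degeneration analysis tied to violations of $(c)$--$(e)$, is accurate. One point to be careful about if you were to carry this out: in the version stated here the polyhedron is only \emph{almost simple} (ideal vertices are $4$-valent), so the target $A_C$ is not an open subset of $(0,\pi/2]^E$ but sits inside a linear subspace cut out by the equalities in $(a)$ and $(b)$ at cusps; the local-homeomorphism and connectedness arguments must be run relative to that stratum. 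Also, the ``seed example plus degree argument'' in your final step is the delicate part of the method and typically requires either an explicit construction for each combinatorial type or an induction on the complexity of $C$; simply asserting a seed exists would be circular.
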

Since all the dihedral angles of $Q^3$ are $\frac{\pi }{2}$, by Theorem \ref{thm:And} $(c)$, 
we obtain Condition $($4$)$. Moreover, by Theorem \ref{thm:And} $(b)$ and $(e)$, 
we obtain Condition $($5$)$. 

Before proving Lemma \ref{lem:3-1}, we have to prove some sublemmas. 
Now we prepare suitable notation which will be used in their proofs. 
Let $B_i$ be a compact $2$-dimensional face which is adjacent to $A_i$ and $A_{i+1}$ 
$($integer $i$ can be 0, 1, 2 or 3, and when $i=3$, we assume that $A_{i+1}=A_0$ $)$. 
One of the endpoints of the edge $A_i\cap A_{i+1}$ is a cusp, and the other is a vertex. 
Thus any $2$-dimensional face which is adjacent to both $A_i$ and $A_{i+1}$ must have this vertex. 
But, since $Q^3$ is almost simple, this vertex is shared only by three $2$-dimensional faces. 
That is to say, $B_i$ is the only $2$-dimensional face which is adjacent to both $A_i$ and $A_{i+1}$ 
for $i=0, 1, 2, 3$. Thus we obtain the following sublemma. 

\begin{sublem}\label{sublem:3-3} 
\textit{No compact $2$-dimensional face of $Q^3$ is adjacent to both $A_i$ and $A_{i+1}$ other than $B_i$. }
\end{sublem}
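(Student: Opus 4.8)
The plan is to localize the question at the single ordinary vertex lying on the edge $A_i\cap A_{i+1}$, and then to exploit that $Q^3$ is almost simple; throughout we keep the standing assumption $c(Q^3)=1$. First I would note that, by Condition (2), the adjacent faces $A_i$ and $A_{i+1}$ meet in exactly one edge. The Euclidean closure of this edge has the cusp $c$ as one endpoint; since $c$ is the only cusp of $Q^3$, the other endpoint is not at infinity, so it is an ordinary vertex, namely the vertex $v_{i+1}$ fixed in the discussion above (with the convention $A_4=A_0$).

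Next, let $F$ be any compact $2$-dimensional face of $Q^3$ that is adjacent to both $A_i$ and $A_{i+1}$. As $A_i$ and $A_{i+1}$ contain the cusp $c$ they are non-compact, so $F$ differs from each of them, and $F$, $A_i$, $A_{i+1}$ are three pairwise adjacent hyperfaces of $Q^3$. By Proposition~\ref{pro:2-1}(b) they meet at a common $0$-dimensional face, i.e.\ at a vertex. This vertex lies in $A_i\cap A_{i+1}$, hence it is either $c$ or $v_{i+1}$; since $F$ is compact it cannot contain the point at infinity $c$, so $F$ passes through $v_{i+1}$. (Alternatively one can invoke Condition (4) in place of Proposition~\ref{pro:2-1}(b).)

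Finally, because $Q^3$ is almost simple, the ordinary vertex $v_{i+1}$ belongs to exactly three $2$-dimensional faces; two of them are $A_i$ and $A_{i+1}$, and the unique compact one among the three is, by definition, $B_i$ (which is adjacent to both $A_i$ and $A_{i+1}$ because at $v_{i+1}$ the three faces meet pairwise along the three edges through that vertex). Therefore $F=B_i$, which is the assertion. The only step that requires any care is pinning the common point of $F$, $A_i$, $A_{i+1}$ down to $v_{i+1}$: this combines Proposition~\ref{pro:2-1}(b) (which yields a vertex, not merely an arbitrary common point) with the fact that any common point of $F$, $A_i$ and $A_{i+1}$ must lie on the single shared edge $A_i\cap A_{i+1}$, whose only finite vertex is $v_{i+1}$ — here the hypothesis $c(Q^3)=1$ is what guarantees that the second endpoint of that edge is finite. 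Everything else is bookkeeping with the local combinatorics of right-angled polyhedra in $\mathbb{H}^3$ recorded in Conditions (1)--(5).
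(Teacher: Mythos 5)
Your argument is correct and is essentially the paper's own proof: the paper likewise observes that the edge $A_i\cap A_{i+1}$ has the cusp $c$ and one ordinary vertex as its endpoints, that any face adjacent to both $A_i$ and $A_{i+1}$ must contain that vertex, and that almost simplicity allows only three $2$-dimensional faces through it. Your use of Proposition~\ref{pro:2-1}(b) merely makes explicit the step the paper leaves implicit in its ``thus,'' so there is nothing to add.
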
%

On the other hand, by the following sublemma, we know that there is no compact $2$-dimensional face of $Q^3$ which is adjacent to both $A_i$ and $A_{i+2}$ for $i=0, 1$. 

\begin{sublem}\label{sublem:3-4} 
\textit{Assume that a right-angled polyhedra $Q^3$ has a cusp $c$. Let $A$ and $A'$ be $2$-dimensional faces of $Q^3$ which are parallel but sharing this cusp $c$. 
Then any $2$-dimensional face which is adjacent to both $A$ and $A'$ must also have the cusp $c$. }
\end{sublem}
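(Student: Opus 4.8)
The plan is to argue by contradiction using Proposition \ref{pro:2-1}(c) together with the combinatorial conditions already established for $Q^3$. Suppose $F$ is a compact $2$-dimensional face adjacent to both $A$ and $A'$, where $A$ and $A'$ are parallel but share the cusp $c$. Since $A$ and $A'$ are parallel and $F$ is adjacent to both of them, Proposition \ref{pro:2-1}(c) immediately tells us that $A$, $A'$ and $F$ meet at a cusp; call it $c'$. The content of the sublemma is then that $c' = c$, i.e. that the cusp produced by Proposition \ref{pro:2-1}(c) is forced to be the original one.

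To see this, I would work in the upper half-model with $c$ placed at the point at infinity, so that $A$ and $A'$ lie on two parallel vertical Euclidean planes whose Euclidean closures meet only at $\infty$; their common point at infinity is exactly $c$. The hyperplane $\langle F \rangle$ containing $F$ is either another vertical plane or a hemisphere meeting $\partial \mathbb{H}^3$ orthogonally. If $F$ is adjacent to (rather than parallel to) $A$, then $\langle F \rangle$ and $\langle A \rangle$ actually intersect inside $\mathbb{H}^3$; the same holds for $A'$. A vertical plane $\langle F\rangle$ meeting both parallel vertical planes $\langle A\rangle$, $\langle A'\rangle$ inside $\mathbb{H}^3$ cannot also share a finite boundary point with both of them, so the only cusp common to $A$, $A'$, $F$ guaranteed by Proposition \ref{pro:2-1}(c) must be $\infty = c$; a hemisphere $\langle F\rangle$ simply cannot meet two parallel vertical planes at a common finite boundary point either, so again the shared cusp is $c$. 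Thus in all cases $F$ passes through $c$.

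The key step — and the main obstacle — is making the geometric dichotomy above airtight: one must check that Proposition \ref{pro:2-1}(c) cannot supply a \emph{second} cusp $c' \neq c$ through which $A$, $A'$, $F$ all pass, which amounts to the elementary fact that two parallel vertical planes in the upper half-space have a unique common boundary point, namely $\infty$. Once that is observed, the conclusion is forced, since any common point at infinity of the Euclidean closures of $A$ and $A'$ must be that unique point $c$, and Proposition \ref{pro:2-1}(c) places the cusp of $A$, $A'$, $F$ among the common points at infinity of $\langle A\rangle$ and $\langle A'\rangle$. Therefore every $2$-dimensional face adjacent to both $A$ and $A'$ contains $c$, which is what we wanted to prove. $\qed$
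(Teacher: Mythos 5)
Your proof is correct, but it takes a genuinely different route from the paper's. You derive the sublemma from Proposition \ref{pro:2-1}(c): any face $F$ adjacent to the parallel pair $A$, $A'$ must meet them at some cusp $c'$, and since two parallel hyperfaces share exactly one point at infinity (in the upper half-model, two Euclidean-parallel vertical planes meet the boundary sphere only at $\infty$), that cusp is forced to be $c$. The paper instead runs a self-contained geometric contradiction: it supposes a face $B$ adjacent to both $A$ and $A'$ does \emph{not} contain $c$, so that $\langle B\rangle$ is an upper hemisphere orthogonal to the two vertical planes $\langle A\rangle$ and $\langle A'\rangle$; orthogonality forces each of these vertical planes to pass through the north pole of that hemisphere, so $\langle A\rangle$ and $\langle A'\rangle$ would meet inside $\mathbb{H}^3$, contradicting parallelism. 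Your version is shorter because it leans on the quoted Potyagailo--Vinberg proposition, which already packages the essential step (adjacency to two parallel faces forces a common cusp); the paper's version buys independence from that proposition at the cost of one explicit observation about hemispheres and vertical planes, and it also yields the extra conclusion, used later, that the only faces adjacent to both $A$ and $A'$ are the remaining two faces through $c$. One cosmetic point: you open by supposing $F$ is \emph{compact}, but your argument never uses compactness --- it directly shows that any face adjacent to both $A$ and $A'$ contains $c$ --- so that hypothesis should simply be dropped from the setup.
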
%

\begin{proof}
Since $Q^3$ is a right-angled polyhedron in $\mathbb{H}^3$, 
the number of $2$-dimensional faces of $Q^3$ which have $c$ is 4. 
We denote these faces by $A$, $A'$, $A''$ and $A'''$. 
We may assume that $A''$ and $A'''$ $($resp. $A$ and $A')$ are parallel. 
We may also assume that the cusp $c$ of $Q^3$ is identified with the point at infinity of the upper half-space. 
In this case, each of the hyperplanes $\langle A\rangle $, $\langle A'\rangle $, 
$\langle A''\rangle$ and $\langle A'''\rangle$ has to be a vertical Euclidean plane which 
intersects the boundary of $\mathbb{H}^3$ orthogonally. 

We suppose that there is a $2$-dimensional face which is adjacent to both $A$ and $A'$ 
other than $A''$ and $A'''$. We denote this face by $B$. 
Since $B$ does not have the cusp $c$, 
the hyperplane $\langle B\rangle$ must be an upper hemisphere which orthogonally 
intersects with $\langle A\rangle$ and $\langle A'\rangle$. 
Thus both $\langle A\rangle$ and $\langle A'\rangle $ share 
the north pole of $\langle B\rangle $ in $\mathbb{H}^3$. 
But $A$ and $A'$ are parallel. 
Thus $\langle A\rangle$ does not intersect $\langle A'\rangle$ in $\mathbb{H}^3$. 
Thus there are no $2$-dimensional faces which are adjacent to both $A$ and $A'$ other than $A''$ and $A'''$. 
By the same reason, there are no $2$-dimensional faces which are adjacent to both $A''$ and $A'''$ other than 
$A$ and $A'$. 
\end{proof}

By this sublemma, we know that $B_0$, $B_1$, $B_2$ and $B_3$ are different. 
For example, if $B_0$ and $B_1$ are the same face, then this face is adjacent to both $A_0$ and $A_2$, which contradicts Sublemma \ref{sublem:3-4}. 

From the above, $Q^3$ has at least eight $2$-dimensional faces. 
Suppose that $A_0$, $A_1$, $A_2$ and $A_3$ have $m$ vertices in total. 
Then, other than the edges terminating at the cusp $c$, faces $A_0$, $A_1$,
$A_2$ and $A_3$ have $m$ edges in total. 
Since each $B_i$ $(i=0, 1, 2, 3)$ is adjacent to two of $A_0$, $A_1$, $A_2$ and
$A_3$, there are eight edges shared by $B_i$ ($i=0, 1, 2, 3$) 
and one of $A_j$ $(j=0, 1, 2, 3)$. 
Then there are $(m-8)$ edges left when $m>8$. 
Take one of these edges, say $e$, and fix it. 
We may assume that $e$ belongs to $A_0$. 
Since any pair of faces can share only one edge, $e$
belongs to neither $B_0$ nor $B_3$. Therefore there is a new face 
$B_4$. Because of Sublemma \ref{sublem:3-4}, $B_4$ cannot be adjacent to $A_2$ which is
parallel to $A_0$. On the other hand, Sublemma \ref{sublem:3-3} tells us that $B_4$
cannot be adjacent to $A_1$ (resp.~$A_3$), since $B_4$ is 
different from $B_0$ (resp.~$B_3$) by our choice of $e$. 
Moreover, by Condition $(2)$, $B_4$ has a unique common edge with $A_0$. 
That is, if there is a $2$-dimensional face which is adjacent to $A_0$ but does not 
have the edge $e$, then this face is not $B_4$. 
By this observation, Sublemmata \ref{sublem:3-3}-\ref{sublem:3-4} and Condition $(2)$, 
the new compact $2$-dimensional faces which are adjacent to each of 
the $(m-8)$ edges above are different. 
Therefore, the number of faces 
in $Q^3$ must be greater than or equal to $m$, the number of vertices of the 
faces $A_0$, $A_1$, $A_2$ and $A_3$. 
The configuration of $2$-dimensional faces for the case $m=9$ 
is shown in Fig. \ref{fig:2}. 
\clearpage
\begin{figure}[htp]
\begin{center}
\includegraphics[width=10cm]{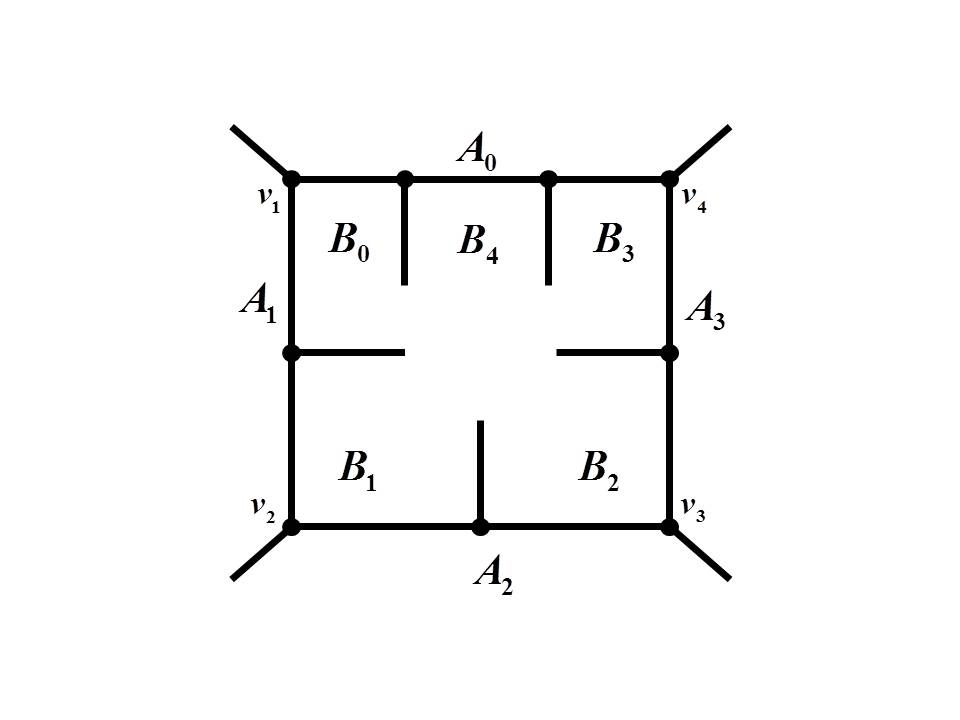}
\end{center}
\caption{The number of vertices of $A_0$, $A_1$, $A_2$ and $A_3$ is nine. }
\label{fig:2}
\end{figure}

\indent We add one more sublemma. 
\begin{sublem}\label{sublem:3-5}
\textit{The compact $2$-dimensional face $B_0$ $($resp. $B_1)$ is not adjacent to $B_2$ $($resp. $B_3)$. }
\end{sublem}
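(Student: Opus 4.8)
The plan is to derive a contradiction from the assumption that $B_0$ and $B_2$ are adjacent by exhibiting a forbidden circuit of four cyclically adjacent $2$-dimensional faces, i.e.\ a configuration excluded by Condition $(5)$ (equivalently by Andreev's part $(e)$ of Theorem \ref{thm:And}, using that $Q^3$ is almost simple and that $c(Q^3)=1$).

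First I would collect the adjacencies already available: by construction $B_0$ is adjacent to $A_0$ and $A_1$, and $B_2$ is adjacent to $A_2$ and $A_3$, while $A_3$ and $A_0$ are adjacent since they are consecutive among the four faces through $c$. Assuming in addition that $B_0$ and $B_2$ are adjacent, the four faces $A_0$, $B_0$, $B_2$, $A_3$ are then cyclically adjacent. Next I would check that this circuit is prismatic, i.e.\ that its two diagonal pairs of faces are non-adjacent: if $B_0$ were adjacent to $A_3$, then $B_0$ would be a compact face adjacent to both $A_3$ and $A_0$, forcing $B_0=B_3$ by Sublemma \ref{sublem:3-3}, which contradicts the fact (recorded after Sublemma \ref{sublem:3-4}) that the $B_i$ are pairwise distinct; and if $A_0$ were adjacent to $B_2$, then $B_2$ would be adjacent to both members $A_0$ and $A_2$ of a parallel pair of faces sharing the cusp $c$, so by Sublemma \ref{sublem:3-4} it would contain $c$, which is impossible since $B_2$ is compact. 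Finally, the four faces are pairwise distinct, because each $A_j$ is non-compact, each $B_i$ is compact, $A_0\neq A_3$, and $B_0\neq B_2$; hence $\{A_0,B_0,B_2,A_3\}\neq\{A_0,A_1,A_2,A_3\}$, contradicting Condition $(5)$. Therefore $B_0$ and $B_2$ are not adjacent, and the claim for $B_1$ and $B_3$ follows verbatim after shifting all indices by one.

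The only step I expect to require real care is the verification that $A_0, B_0, B_2, A_3$ is an honest prismatic $4$-circuit of four distinct faces — since Condition $(5)$, and Andreev's criterion behind it, rule out precisely such circuits — and this is exactly the place where Sublemmata \ref{sublem:3-3} and \ref{sublem:3-4} are used, one for each diagonal. The remaining verifications are immediate.
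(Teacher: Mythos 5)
Your proof is correct and follows the same route as the paper: assuming $B_0$ and $B_2$ (resp.\ $B_1$ and $B_3$) are adjacent produces the cyclic quadruple $A_0$, $B_0$, $B_2$, $A_3$ (resp.\ $A_1$, $B_1$, $B_3$, $A_0$), which violates Condition $(5)$. The paper's own proof is exactly this one-line observation; your extra verifications that the circuit is prismatic (via Sublemmata \ref{sublem:3-3} and \ref{sublem:3-4}) and that the four faces are distinct are left implicit in the paper but are correct and only make the appeal to Condition $(5)$ more airtight.
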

\begin{proof}
Suppose that $B_0$ $($resp. $B_1)$ is adjacent to $B_2$ $($resp. $B_3)$, 
then the four $2$-dimensional faces: $A_0$, $B_0$, $B_2$ and $A_3$ 
$($resp. $A_1$, $B_1$, $B_3$ and $A_0)$ are cyclically adjacent. 
These contradicts Condition $(5)$. 
Hence $B_0$ $($resp. $B_1)$ is not adjacent to $B_2$ $($resp. $B_3)$. 
\end{proof}

\indent In what follows, we present a proof of Lemma \ref{lem:3-1};  it is divided
into several cases according to the number of vertices of $A_0$, $A_1$,
$A_2$ and $A_3$. 

\subsection{The number of vertices of $A_0$, $A_1$, $A_2$ and $A_3$ is eight. }
\ 

\indent In this case, $A_0$, $A_1$, $A_2$ and $A_3$ are quadrangles. 
Thus, any compact $2$-dimensional face which is adjacent to $A_i$ $(i=0,1,2, 3)$ must be 
$B_0$, $B_1$, $B_2$ or $B_3$. For example, the compact $2$-dimensional faces which are adjacent to $A_0$ are only $B_0$ and $B_3$ since $A_0$ has exactly four edges. 
These four edges are $A_0\cap A_1$, $A_0\cap A_3$, $A_0\cap B_0$ and $A_0\cap B_3$. 
Since $A_0$ has one cusp and three vertices, the vertex of $A_0$ other than $v_1$ and $v_4$ must be $A_0\cap B_0\cap B_3$. Thus $B_0$ is adjacent to $B_1$. 
Similarly, we can show that $B_1$ $($resp. $B_2$, $B_3)$ is adjacent to $B_2$ $($resp. $B_3$, $B_0)$. 
On the other hand, Sublemma \ref{sublem:3-5} says that $B_0$ $($resp. $B_1)$ 
is not adjacent to $B_2$ $($resp. $B_3)$. 
Thus $B_0$, $B_1$, $B_2$ and $B_3$ are cyclically adjacent. 
But this does not occur by Condition $($5$)$. 
That is, the present case is impossible. 
\subsection{The number of vertices of $A_0$, $A_1$, $A_2$ and $A_3$ is nine. }
\ 

\indent In this case, we may assume that $A_0$ is a pentagon, 
and that $A_1$, $A_2$ and $A_3$ are quadrangles. 
We denote by $B_4$ the new compact $2$-dimensional face 
which is adjacent to $A_0$ other than $B_0$ and $B_3$. 
Note that $B_4$ is adjacent to $B_0$ and $B_3$. 
If $B_4$ is adjacent to $B_1$ $($resp. $B_2)$, then $A_0$, $B_4$, $B_1$ $($resp. $B_2)$ and 
$A_1$ $($resp. $B_2)$ are cyclically adjacent. But this does not satisfy Condition $(5)$. 
Thus $B_4$ is not adjacent to $B_1$ or $B_2$. 
Since $B_4$ has at least five edges, 
it is adjacent to at least two $2$-dimensional faces besides $A_0$, $B_0$ and $B_3$. 
Hence $Q^3$ has at least eleven $2$-dimensional faces. 

\indent Assume that $Q^3$ has exactly eleven $2$-dimensional faces. 
In this case, $B_4$ is adjacent to exactly five $2$-dimensional faces: $A_0$, $B_0$, $B_3$ 
and two other $2$-dimensional faces which are compact. 
One of the last two compact $2$-dimensional faces is adjacent to $B_0$, and the other is adjacent to $B_3$. 
We denote by $F$ the first one, and denote by $F'$ the second one; 
$F$ (resp. $F'$) is adjacent to both $B_4$ and $B_0$ (resp. $B_4$ and $B_3$). 
If $B_0$ is adjacent to $F'$, then $B_0$, $F'$ and $B_4$ are pairwise adjacent. 
Thus, in this case, these faces must share a vertex by Condition $($4$)$. 
But the endpoints of the edge $B_0\cap B_4$ are $A_0\cap B_0\cap B_4$ and $B_0\cap B_4\cap F$. 
That is to say, $B_0$ is not adjacent to $F'$. 
Since $A_0$ has five edges, $B_0$ is not adjacent to $B_3$ by Condition $($4$)$. 
Since $B_0$ is adjacent to both $A_0$ and $A_1$, by Sublemma \ref{sublem:3-4}, 
it is adajacent to neither $A_2$ nor $A_3$. 
Moreover, by Sublemma \ref{sublem:3-5}, $B_0$ is not adjacent to $B_2$. 
Thus $B_0$ can be adjacent only to $A_0$, $A_1$, $B_1$, $B_4$ and $F$. 
On the other hand, by the same reason, $B_3$ is adjacent only to $A_0$, $A_3$, $B_2$, $B_4$ and $F'$. 
Thus $F$ must be adjacent to $B_0$, $B_1$, $B_2$, $B_4$ and $F'$ 
to satisfy Condition $(1)$. 
But in this case $F$, $B_2$, $B_3$ and $B_4$ are cyclically adjacent. 
This is a contradiction to Condition $(5)$. 
Hence $Q^3$ has more than eleven $2$-dimensional faces. 

\indent Now we assume that $Q^3$ has exactly twelve $2$-dimensional faces. 
Then there are exactly eight compact $2$-dimensional faces of $Q^3$. 
We denote by $F$, $F'$ and $F''$ the compact $2$-dimensional faces which are different from 
$B_i$ $(i=0, 1, 2, 3, 4)$. 
We can choose $F$ and $F'$ as above. 
If $F$ is adjacent to $B_2$, then $F$, $B_2$, $B_3$ and $B_4$ are cyclically adjacent. 
And if $F$ is adjacent to $B_3$, 
then $F$, $B_3$ and $B_4$ are pairwise adjacent but do not share a vertex 
because the endpoints of the edge $B_3\cap B_4$ are 
$B_3\cap B_4\cap A_0$ and $B_3\cap B_4\cap F'$. 
Thus, by Conditions $($4-5$)$, 
$F$ can be adjacent to neither $B_2$ nor $B_3$. 

\indent Suppose that $B_0$ is adjacent to $F''$. 
Now we consider the $2$-dimensional faces which $F''$ can be adjacent to. 
If $F$ is adjacent to $B_1$, then $B_0$, $B_1$ and $F$ are pairwise adjacent. 
And then they must have a common vertex by Condition $($4$)$. 
It means that one of the endpoints of the edge $B_0\cap F$ is $B_0\cap B_1\cap F$ and 
the other is $B_0\cap B_4\cap F$ by Condition $($2$)$. 
But in this case $B_0$ has only five edges: 
$A_0\cap B_0$, $A_1\cap B_0$, $B_0\cap B_1$, $B_0\cap B_4$, $B_0\cap F$. 
That is, $B_0$ cannot be adjacent to $F''$. This contradicts our assumption. 
Thus $F$ is not adjacent to $B_1$. 
Thus $F$ can be adjacent only to $B_0$, $B_4$, $F'$ and $F''$. 
But this violates Condition $($1$)$. 
In the end, $B_0$ cannot be adjacent to $F''$. 
Analogously, we can conclude that $B_3$ cannot be adjacent to $F''$. 
Thus $F''$ must be adjacent to $B_1$, $B_2$, $B_4$, $F$ and $F'$. 
In this case, $B_0$, $B_4$, $F''$, $B_1$ are cyclically adjacent. 
This is a contradiction to Condition $(5)$. 

\indent Hence, in this case, $Q^3$ must have more than twelve $2$-dimensional faces. 
\begin{figure}[htp]
\begin{center}
\includegraphics[width=8cm]{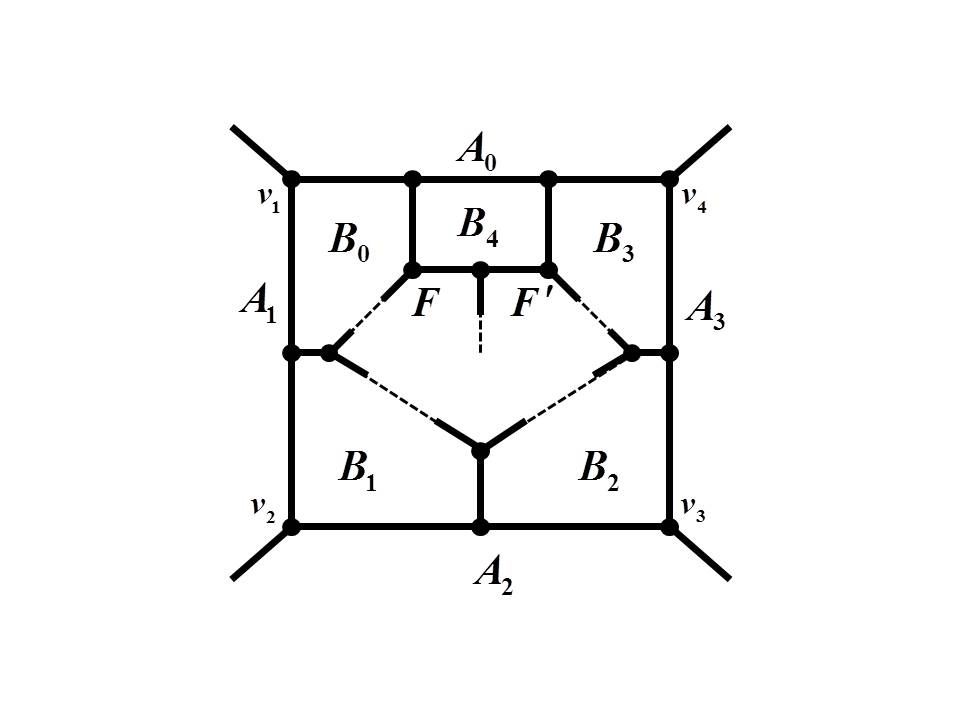}
\caption{$a_2(Q^3)=11$ in the case 3.2.}
\label{fig:2-1}
\includegraphics[width=8cm]{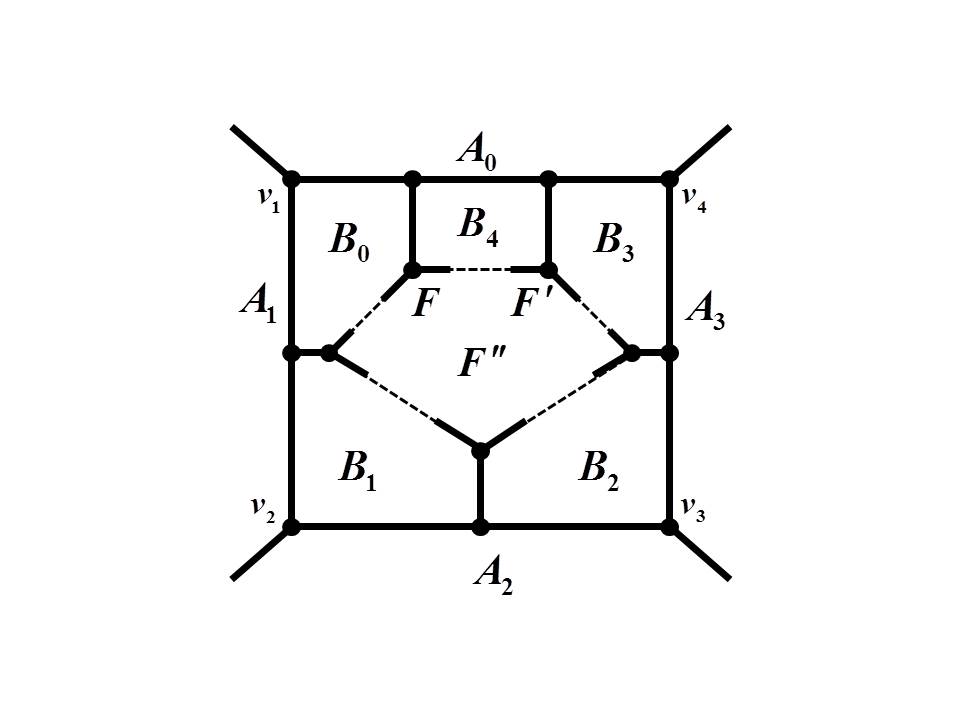}
\end{center}
\caption{$a_2(Q^3)=12$ in the case 3.2.}
\label{fig:2-2}
\end{figure}
\clearpage

\subsection{The number of vertices of $A_0$, $A_1$, $A_2$ and $A_3$ is ten. }
\ 

\indent Under this assumption, we may assume that one of the following three cases occurs: 

\indent $($\ref{sec:2-3-1}$)$ the face $A_0$ is a hexagon, and the faces $A_1$, $A_2$ and $A_3$ are quadrangles $($Fig. \ref{fig:3-1}$)$, 

\indent $($\ref{sec:2-3-2}$)$ the faces $A_0$ and $A_1$ are pentagons, 
and the faces $A_2$ and $A_3$ are quadrangles 
$($Fig. \ref{fig:3-2}$)$, 

\indent $($\ref{sec:2-3-3}$)$ the faces $A_0$ and $A_2$ are pentagons, 
and the faces $A_1$ and $A_3$ are quadrangles $($Fig. \ref{fig:3-3}$)$. 
\subsubsection{The face $A_0$ is a hexagon, and the faces $A_1$, $A_2$ and $A_3$ are quadrangles.}
\label{sec:2-3-1}
 
\indent We denoted by $B_4$ the compact $2$-dimensional face which is adjacent to both $A_0$ and $B_0$. 
Let $B_5$ be the compact $2$-dimensional face which is adjacent to $A_0$, $B_3$ and $B_4$. 
If $B_4$ is adjacent to $B_1$ $($resp. $B_2)$, then the four $2$-dimensional faces 
$A_0$, $A_1$, $B_1$ and $B_4$ $($resp. $A_0$, $A_3$, $B_2$ and $B_4)$ are 
cyclically adjacent. But this contradicts Condition $(5)$. 
That is to say, $B_4$ is adjacent to neither $B_1$, nor $B_2$. 
In addition, since $A_0$, $B_3$ and $B_4$ cannot be pairwise adjacent 
by Condition $(4)$, $B_4$ is not adjacent to $B_3$. 
Thus there are at least two compact $2$-dimensional faces which are adjacent to $B_4$, 
other than $B_0$ and $B_5$. 
We denote these faces by $F$ and $F'$. We suppose that $a_2(Q^3)=12$. 
Then the compact $2$-dimensional faces are only $B_i$ $(i=0, 1, 2, 3, 4, 5)$, $F$ and $F'$. 
Note that $B_5$ also has to be adjacent to $F$ and $F'$. 
Thus $F$, $F'$ and $B_4$ are pairwise adjacent, and $F$, $F'$ and $B_5$ are also pairwise adjacent. 
This means that one of the endpoints of the edge $F\cap F'$ is in $B_4$,  the other is in $B_5$. 
But in this case, three compact $2$-dimensional faces $B_4$, $B_5$ and $F$ $($or $F')$ 
must be pairwise adjacent but do not share a vertex. 
This is impossible by Condition $(4)$. Hence $a_2(Q^3)>12$. 
\clearpage
\begin{figure}[htp]
\begin{center}
\includegraphics[width=8cm]{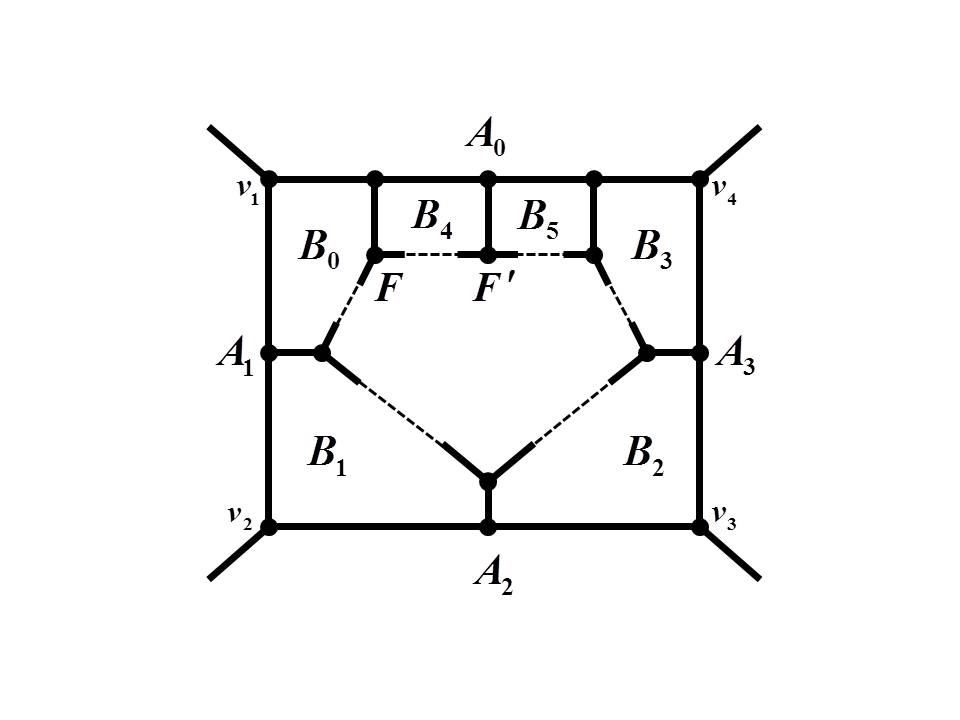}
\end{center}
\caption{$A_0$ is a hexagon, and $A_1$, $A_2$ and $A_3$ are quadrangle. }
\label{fig:3-1}
\end{figure}

\subsubsection{The faces $A_0$ and $A_1$ are pentagons, and the faces $A_2$ and $A_3$ are quadrangles. }
\label{sec:2-3-2}
\indent Let $B_6$ be the compact $2$-dimensional face which is adjacent to $A_1$, $B_0$ and $B_1$. 
Denote by $F$ the compact $2$-dimensional face which is adjacent to $B_0$ and $B_4$, 
and denote by $F'$ the compact $2$-dimensional face which is adjacent to $B_3$ and $B_4$. 
Suppose that $Q^3$ has exactly twelve $2$-dimensional faces. 
By analogy to the case $($\ref{sec:2-3-1}$)$, 
we can show that $B_4$ and $B_6$ are adjacent to $F$ and $F'$. 
But in this case there exist four $2$-dimensional faces $B_4$, $B_0$, $B_6$ and $F$ $($or $F')$, 
which are cyclically adjacent. 
This is a contradiction to Condition $(5)$. 
Hence $Q^3$ has more than twelve $2$-dimensional faces. 
\clearpage
\begin{figure}[htp]
\begin{center}
\includegraphics[width=10cm]{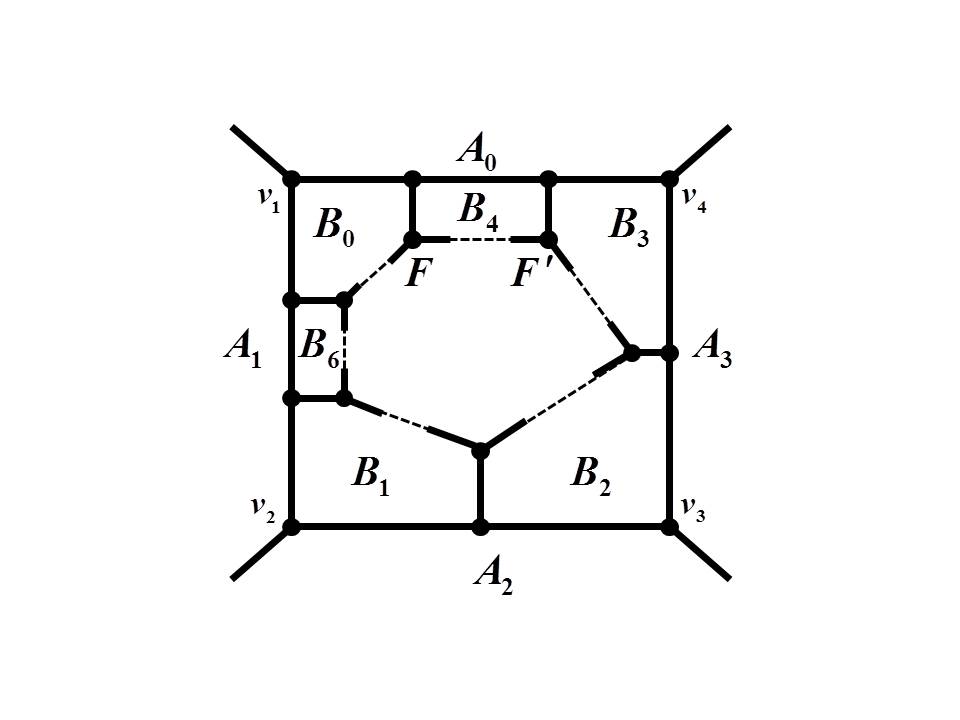}
\end{center}
\caption{$A_0$ and $A_1$ are pentagons, and $A_2$ and $A_3$ are quadrangles. }
\label{fig:3-2}
\end{figure}

\subsubsection{The faces $A_0$ and $A_2$ are pentagons, and the faces $A_1$ and $A_3$ are quadrangles. }
\label{sec:2-3-3}
\indent Let $B_7$ be the compact $2$-dimensional face which is adjacent to $A_2$, $B_1$ and $B_2$. 
The facets denoted by $F$ and $F'$ are as in the case $($\ref{sec:2-3-2}$)$. 
By Conditions $($4-5$)$, $F$ is different from $B_1$, $B_2$, $B_3$, $B_7$ and $F'$. 
By the same reason, $F'$ is different from $B_0$, $B_1$, $B_2$ and $B_7$. 
Thus $Q^3$ has at least twelve $2$-dimensional faces. 
Note that if $Q^3$ has exactly twelve $2$-dimensional faces, 
then the combinatorial structure of $Q^3$ is exactly as shown in Fig. \ref{fig:face12}. 
By Andreev's theorem $($Theorem \ref{thm:And}$)$, 
it is easy to prove that there exists such a polyhedron in $\mathbb{H}^3$. 
\clearpage
\begin{figure}[htp]
\begin{center}
\includegraphics[width=10cm]{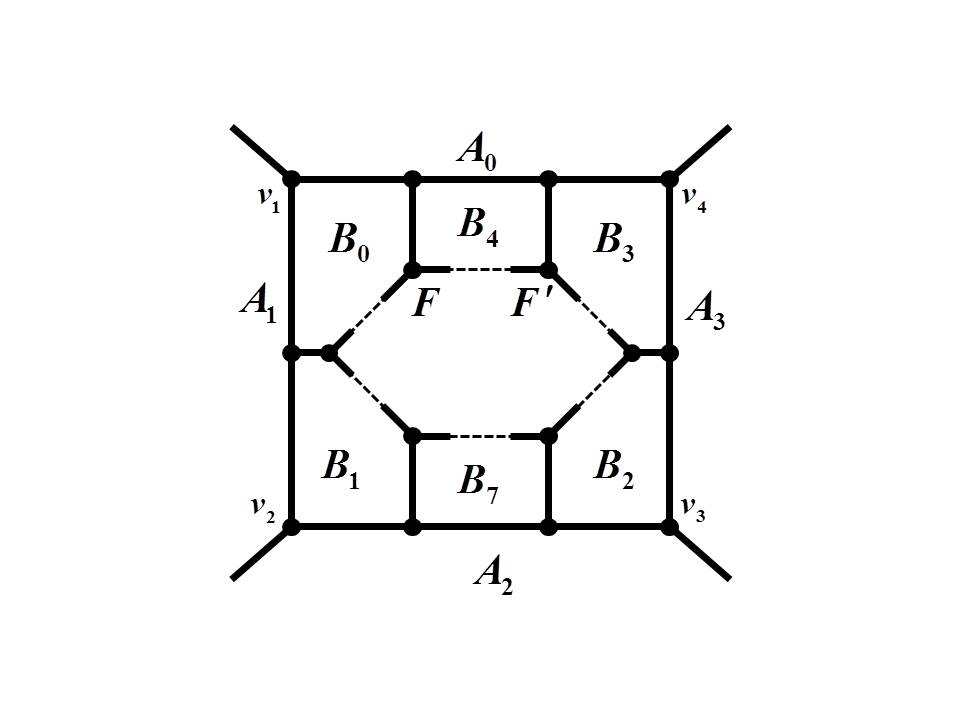}
\end{center}
\caption{$A_0$ and $A_2$ are pentagons, and $A_1$ and $A_3$ are quadrangles. }
\label{fig:3-3}
\begin{center}
\includegraphics[width=10cm]{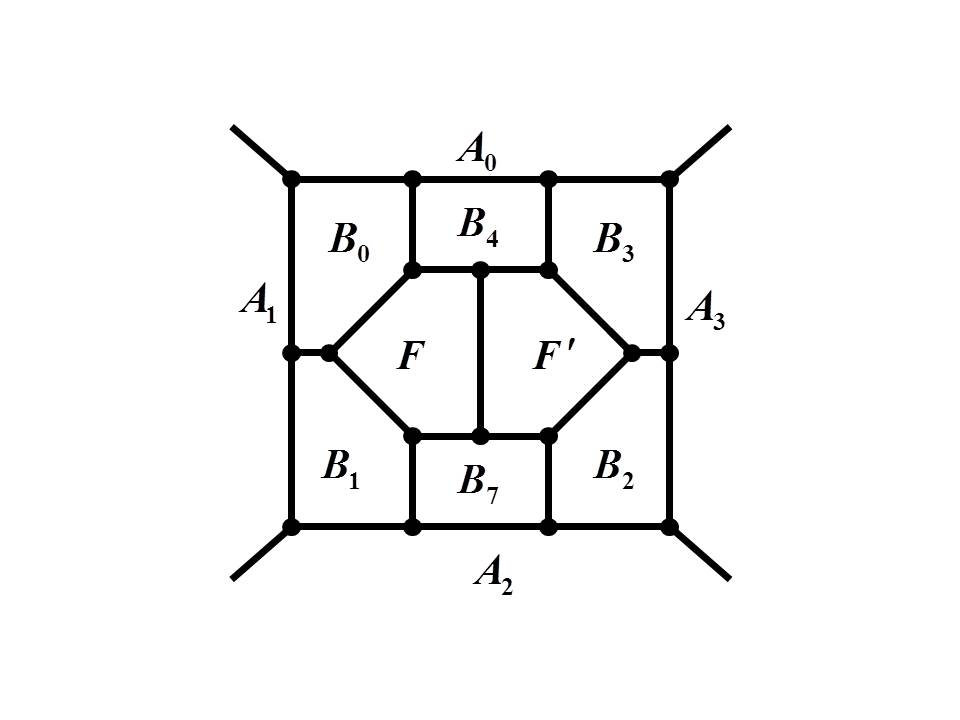}
\end{center}
\caption{$Q^3$ which has exactly one cusp and twelve $2$-dimensional faces.}
\label{fig:face12}
\end{figure}
\clearpage

\subsection{The number of vertices in $A_0$, $A_1$, $A_2$ and $A_3$ is eleven. }
\ 

\indent In this case, there are seven compact $2$-dimensional faces 
which are adjacent to either $A_0$, $A_1$, $A_2$ or $A_3$. 
And there are some other compact $2$-dimensional faces 
which are adjacent to those compact faces. 
Thus $Q^3$ has at least twelve $2$-dimensional faces. 
If $Q^3$ has exactly twelve $2$-dimensional faces, 
then there is only one compact $2$-dimensional face which is adjacent to the compact faces 
which are adjacent to either $A_0$, $A_1$, $A_2$ or $A_3$. 
But if there is only one such a face, 
then there must be some compact $2$-dimensional faces 
which are adjacent to either $A_0$, $A_1$, $A_2$ or $A_3$, 
and which are either triangles or quadrangles. 
But they do not meet Condition $($1$)$. 
Thus $Q^3$ has strictly more than twelve $2$-dimensional faces. 
\subsection{The number of vertices in $A_0$, $A_1$, $A_2$ and $A_3$ is twelve. }
\ 

\indent In this case, there are eight compact $2$-dimensional faces 
which are adjacent to either $A_0$, $A_1$, $A_2$ or $A_3$. 
And there are some other compact faces which are adjacent to those compact faces. 
Thus $Q^3$ has more than twelve $2$-dimensional faces. 
\subsection{The number of vertices in $A_0$, $A_1$, $A_2$ and $A_3$ is greater than twelve. }
\ 

\indent In this case, there are more than nine compact $2$-dimensional faces 
which are adjacent to either $A_0$, $A_1$, $A_2$ or $A_3$. 
Thus $Q^3$ has more than twelve $2$-dimensional faces. 

\subsection{Conclusion of the proof of Lemma \ref{lem:3-1} and a corollary.}
\ 

\indent By the cases 3.1-3.6, we have proved Lemma \ref{lem:3-1}. 

\indent By Proposition \ref{pro:2-2} and Lemma \ref{lem:3-1}, we obtain the following corollary. 
\begin{cor}\label{cor:2-1} 
If $c(Q^3)\leq 1$, then $a_2(Q^3)\geq 12$. 
\end{cor}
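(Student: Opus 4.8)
The statement follows almost immediately once we split according to the value of $c(Q^3)$, which under the hypothesis is either $0$ or $1$. The plan is to treat these two cases separately and invoke, respectively, Proposition~\ref{pro:2-2} and Lemma~\ref{lem:3-1}.

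First I would dispose of the case $c(Q^3)=1$: here Lemma~\ref{lem:3-1} applies verbatim and gives $a_2(Q^3)\geq 12$, which is exactly the desired bound (and moreover pins down the combinatorial type when equality holds). No further work is needed in this case.

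Next I would handle the case $c(Q^3)=0$. One should note at the outset that this case is genuinely possible: unlike in dimensions $n>4$ (cf.\ Proposition~\ref{pro:2-3}), there do exist compact right-angled polyhedra in $\mathbb{H}^3$, so it cannot simply be excluded. However, the third inequality of Proposition~\ref{pro:2-2}, namely $a_2(Q^3)+2c(Q^3)\geq 12$, specializes with $c(Q^3)=0$ to $a_2(Q^3)\geq 12$, which is again the claimed bound. Combining the two cases yields the corollary.

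There is essentially no obstacle in this argument itself; all the difficulty has been absorbed into the proof of Lemma~\ref{lem:3-1} (the long case analysis of Sections~3.1--3.6) and into Proposition~\ref{pro:2-2}. The only point requiring a moment's care is the observation that $c(Q^3)=0$ must be addressed rather than dismissed, and that it is the Potyagailo--Vinberg inequality $a_2(Q^3)+2c(Q^3)\geq 12$, not Lemma~\ref{lem:3-1}, that covers it.
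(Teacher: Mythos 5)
Your proposal is correct and follows exactly the paper's (unwritten-out) argument: the paper derives the corollary ``by Proposition~\ref{pro:2-2} and Lemma~\ref{lem:3-1},'' i.e.\ the case $c(Q^3)=1$ from Lemma~\ref{lem:3-1} and the case $c(Q^3)=0$ from the inequality $a_2(Q^3)+2c(Q^3)\geq 12$. Nothing to add.
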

\begin{rem}
If $c(Q^3)=0$ and $a_2Q^3)=12$, then we have the compact right-angled dodecahedron. 
If $c(Q^3)=1$ and $a_2(Q^3)=12$, then we have a polyhedron in Fig. \ref{fig:face12}, 
which arises by contracting an edge of the dodecahedron above, 
as described in {\rm \cite{ref:ko}}. 
\end{rem}

\section{Proof of Main Theorem: $n=6$}
\indent Assume that $Q^6$ has exactly one cusp. 
Then any $3$-dimensional face of $Q^6$ has at most one cusp. 
Thus any $3$-dimensional face of $Q^6$ has at least twelve 
$2$-dimensional faces by Corollary \ref{cor:2-1}. 
Thus we obtain $a_3^2(Q^6)\geq 12$. 
But by Nikulin's inequality $($Theorem \ref{thm:2-4}$)$, 
we obtain the opposite inequality $a_3^2(Q^6)<12$. 
Hence $Q^6$ has more than one cusp. 

\indent Now we assume that $Q^6$ has exactly two cusps. 
If each of the $3$-dimensional faces of $Q^6$ has at most one cusp, 
then each $3$-dimensional faces of $Q^6$ has more than twelve $2$-dimensional faces. 
Thus we obtain $a_3^2(Q^6)\geq 12$. 
But by Theorem \ref{thm:2-4}, we obtain the inequality $a_3^2(Q^6)<12$. 
Hence there is a $3$-dimensional face which has two cusps. Denote this face by $G$. 
We denote one cusp of $G$ by $c$, and the other cusp by $c'$.  
There are three possibilities: 

\indent $($\ref{sec:3-1-1}$)$ there are no $2$-dimensional faces of $G$ which have two cusps, 

\indent $($\ref{sec:3-1-2}$)$ there is only one $2$-dimensional face which has two cusps, 

\indent $($\ref{sec:3-1-3}$)$ there are two $2$-dimensional faces which have two cusps. 

\indent Note that there is one edge which starts at $c$, and terminates at $c'$ in the case $($\ref{sec:3-1-3}$)$. 

\indent Define the $2$-dimensional faces $A_0$, $A_1$, $A_2$ and $A_3$ as before. 
The case $($\ref{sec:3-1-1}$)$ $($resp. $($\ref{sec:3-1-2}$)$, $($\ref{sec:3-1-3}$))$ 
is depicted in Fig. 9 $($resp. 10, 11$)$. 
A circle in these figures represents the cusp $c'$. 
From now on, we examine each of the above cases. 
\subsection{There are no $2$-dimensional faces of $G$ which have two cusps. }\label{sec:3-1-1}
\ 

\indent The number of $2$-dimensional faces of $G$ sharing $c$ is four, 
and the number of $2$-dimensional face sharing $c'$ is also four. 
There are no $2$-dimensional faces which have both cusps. 
Thus $a_2(G)\geq 8$. 
\begin{figure}[htp]
\begin{center}
\includegraphics[width=9cm]{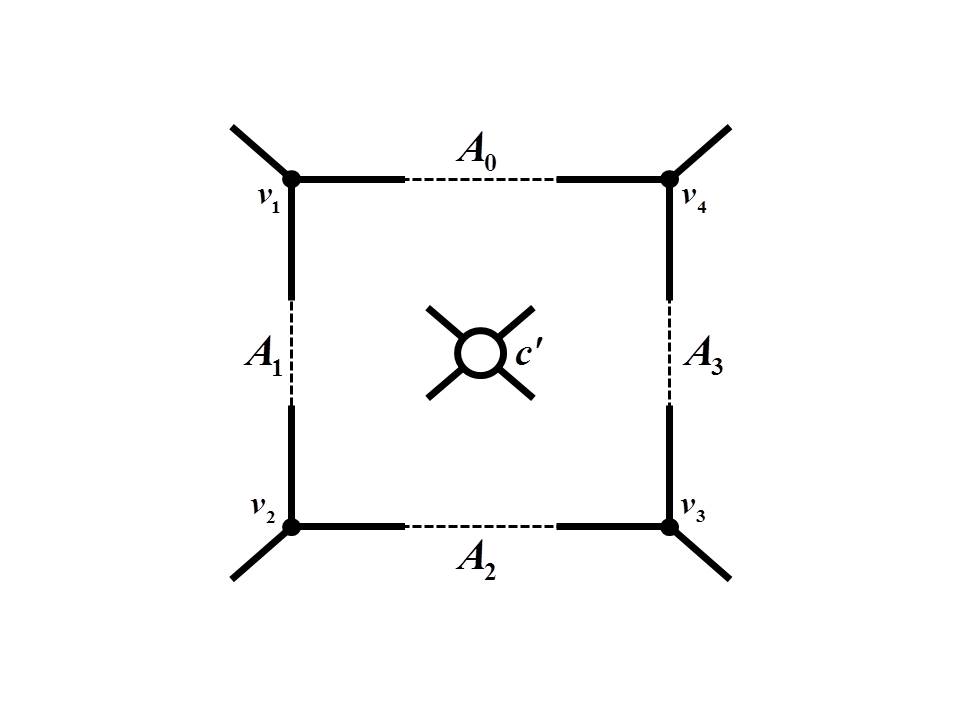}
\end{center}
\caption{The case $($\ref{sec:3-1-1}$)$}
\label{fig:4-1}
\end{figure}

\subsection{There is only one $2$-dimensional face which has two cusps. }\label{sec:3-1-2}
\ 

\indent We may assume that $A_0$ has two cusps. 
Let $B_0$ be the $2$-dimensional face which is adjacent to $A_0$ and $A_1$, 
and let $B_1$ be the $2$-dimensional face which is adjacent to $A_1$ and $A_2$. 
Let $B_2$ be the $2$-dimensional face which is adjacent to $A_2$ and $A_3$, 
and let $B_3$ be the $2$-dimensional face which is adjacent to $A_3$ and $A_0$. 
Because $A_0$ has $c'$, 
there is one $2$-dimensional face which is parallel to $A_0$ and sharing the cusp $c'$ with $A_0$. 
We denote this face by $B_4$. 
Since both $B_0$ and $B_3$ are adjacent to $A_0$, they cannot coincide with $B_4$. 
If $G$ has exactly eight $2$-dimensional faces, 
then $B_4$ must coincide with either $B_1$ or $B_2$. 
Assume that $B_4$ is $B_1$. 
Because $B_0$ is adjacent to $A_0$ and $A_1$, it is adjacent to neither $A_2$ nor $A_3$. 
If $B_0$ is adjacent to $B_2$, then $B_0$, $B_2$, $A_3$ and $A_0$ are cyclically adjacent. 
But this does not occur because of Theorem \ref{thm:And} $(e)$. Thus $B_0$ is not adjacent to $B_2$. 
Eventually, $B_0$ can be adjacent only to $A_0$, $A_1$ and $B_4$ in this case. 
That is to say, $B_0$ must be a triangle. 
But $B_0$ has more than three edges because of Proposition \ref{pro:2-2}. 
This means that $B_4$ cannot coincide with $B_1$. 
By the same reason, $B_4$ cannot coincide with $B_2$. Hence $a_2(G)\geq 9$. 
\begin{figure}[htp]
\begin{center}
\includegraphics[width=10cm]{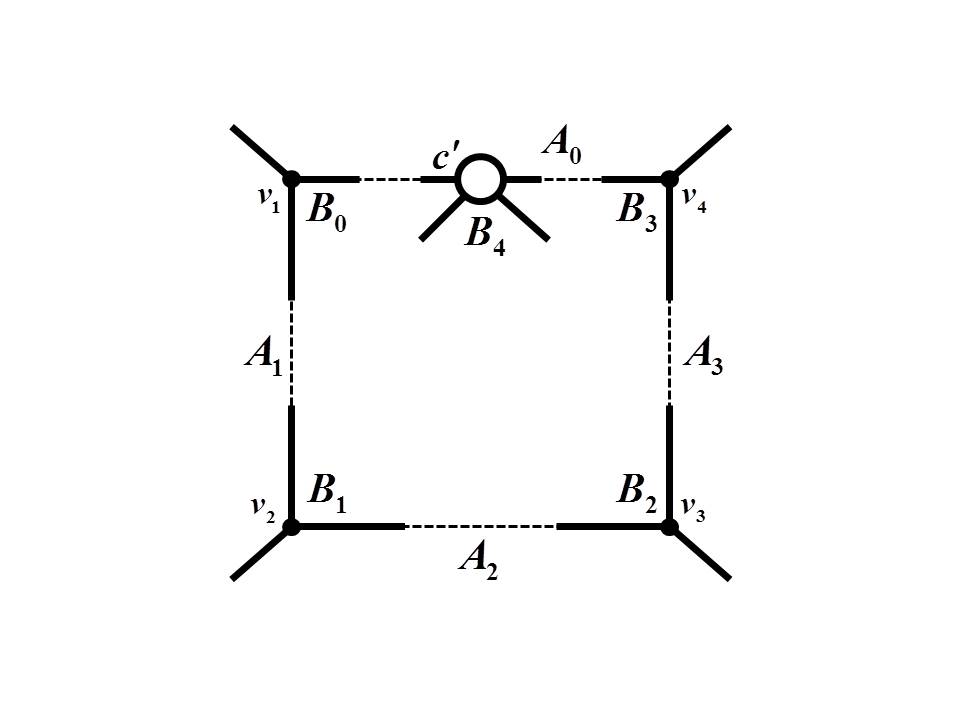}
\end{center}
\caption{The case $($\ref{sec:3-1-2}$)$}
\label{fig:4-2}
\end{figure}
\clearpage
\subsection{There are two $2$-dimensional faces which have two cusps. }\label{sec:3-1-3}
\ 

Let $E_1$ and $E_2$ be the $2$-dimensional faces which share the cusp $c'$, other than $A_0$ and $A_1$. 
Note that exactly four $2$-dimensional faces $A_0$, $A_1$, $E_1$ and $E_2$ have $c'$. 
We may assume that $E_1$ (resp. $E_2$) and $A_0$ (resp. $A_1$) are parallel. 
Denote by $B_2$ the $2$-dimensional face which is adjacent to $A_2$ and $A_3$. 
By Theorem \ref{thm:And} $(c)$, $B_2$ can be adjacent to neither $A_0$, nor $A_1$. 
That is to say, $B_2$ is neither $E_1$ nor $E_2$. 
Thus $B_2$ is compact. Then $B_2$ has to be adjacent to at least five $2$-dimensional faces of $G$. 
Then there exists a compact $2$-dimensional face 
which is adjacent to $B_2$. 
Denote this face by $H$. 

\indent Assume that $a_2(G)=8$. 
In this case, the $2$-dimensional faces of $G$ are only 
$A_0$, $A_1$, $A_2$, $A_3$, $E_1$, $E_2$, $B_2$ and $H$. 
If $H$ is adjacent to both $A_2$ and $A_3$, then $H$ must coincide with $B_2$. 
Thus $H$ can be adjacent only to one of $A_2$ and $A_3$. 
By Sublemma \ref{sublem:3-4}, $H$ can be adjacent only to one of $A_0$ and $E_1$, 
and be adjacent to one of $A_1$ and $E_2$. 
Thus $H$ can be adjacent only to at most four $2$-dimensional faces of $G$. 
This is a contradiction to Condition $($1$)$ because $H$ is compact. 
Hence $a_2(G)\not= 8$. 

\indent Assume that $a_2(H)=9$. In this case, there is another compact $2$-dimensional face of $G$. 
Denote this face by $J$. 

\indent Suppose that $H$ is adjacent to $A_0$. Then $H$ is not adjacent to $A_1$ 
because $A_0$, $A_1$ and $H$ do not have a common vertex. 
Moreover, $H$ is adjacent to neither $A_2$ nor $E_1$ by Sublemma \ref{sublem:3-4}. 
Thus, $H$ must be adjacent to $A_0$, $A_3$, $E_2$, $B_2$ and $J$ 
because it has at least five edges. 
Thus the endpoints of the edge $A_3\cap H$ are $A_0\cap A_3\cap H$ and $A_3\cap B_2\cap H$. 
That is to say, $A_3$ is adjacent only to four $2$-dimensional faces $A_0$, $A_2$, $B_2$ and $H$. 
Because $A_0$, $E_2$ and $H$ are pairwise adjacent, $A_0$ is adjacent only to four $2$-dimensional faces 
$A_1$, $A_3$, $E_2$ and $H$. 
Thus $J$ can be adjacent to $A_1$, $A_2$, $E_1$, $E_2$, $B_2$ and $H$. 
But, since $A_1$ and $E_2$ are parallel, $J$ can be adjacent only to one of them. 
Thus $J$ must be adjacent to $A_2$, $E_1$, $B_2$ and $H$ to satisfy Condition $($1$)$. 
In this case, $B_2$, $H$ and $J$ are pairwise adjacent, so they have a common vertex. 
Thus the four $2$-dimensional faces $A_2$, $A_3$, $H$ and $J$ are cyclically adjacent but share 
neither a vertex nor a cusp 
because the endpoints of the edge $A_2\cap A_3$ are the cusp $c$ and the vertex $A_2\cap A_3\cap B_2$. 
This is a contradiction to Theorem \ref{thm:And} $(e)$. 
Thus $H$ is not adjacent to $A_0$. 

\indent Analogously, we can prove that $H$ is not adjacent to $A_1$. 
Thus $H$ is adjacent to $E_1$, $E_2$, $B_2$, $J$ and one of $A_2$ and $A_3$. 
Since one of the endpoints of the edge $E_1\cap E_2$ is the cusp $c'$, 
$H$ is the only $2$-dimensional face which is adjacent to both $E_1$ and $E_2$. 
Thus $J$ can be adjacent only to one of $E_1$ and $E_2$. 
In this manner, we consider the $2$-dimensional face $J$. 
By Sublemma \ref{sublem:3-4}, $J$ can be adjacent only to one of $A_0$ and $A_2$, 
and only to one of $A_1$ and $A_3$. 
Because there is only one $2$-dimensional face which is adjacent to both $A_2$ and $A_3$, 
$J$ cannot be adjacent to both $A_2$ and $A_3$. 
The common edge of $A_0$ and $A_1$ starts from one cusp and terminates at the other cusp. 
Thus, by Theorem \ref{thm:And} $(c)$, 
if there exists a $2$-dimensional face which is adjacent to both $A_0$ and $A_1$, 
then it must share a cusp with $A_0$ and $A_1$. 
But the $2$-dimensional faces which share a cusp with $A_0$ and $A_1$ are nothing but 
the $2$-dimensional faces $A_2$, $A_3$, $E_1$ and $E_2$. 
Altogether, there is no $2$-dimensional face which is adjacent to both $A_0$ and $A_1$. 
Thus there are only two possibilities. One is that $J$ can be adjacent only to $A_0$, $A_3$, $E_2$ and $H$, 
and the other is that $J$ can be adjacent only to $A_1$, $A_2$, $E_1$ and $H$. 
But both cases do not meet Condition $($1$)$. 
Thus $a_2(G)\not= 9$, and hence  $a_2(G)\geq 10$. 

\begin{figure}[htp]
\begin{center}
\includegraphics[width=10cm]{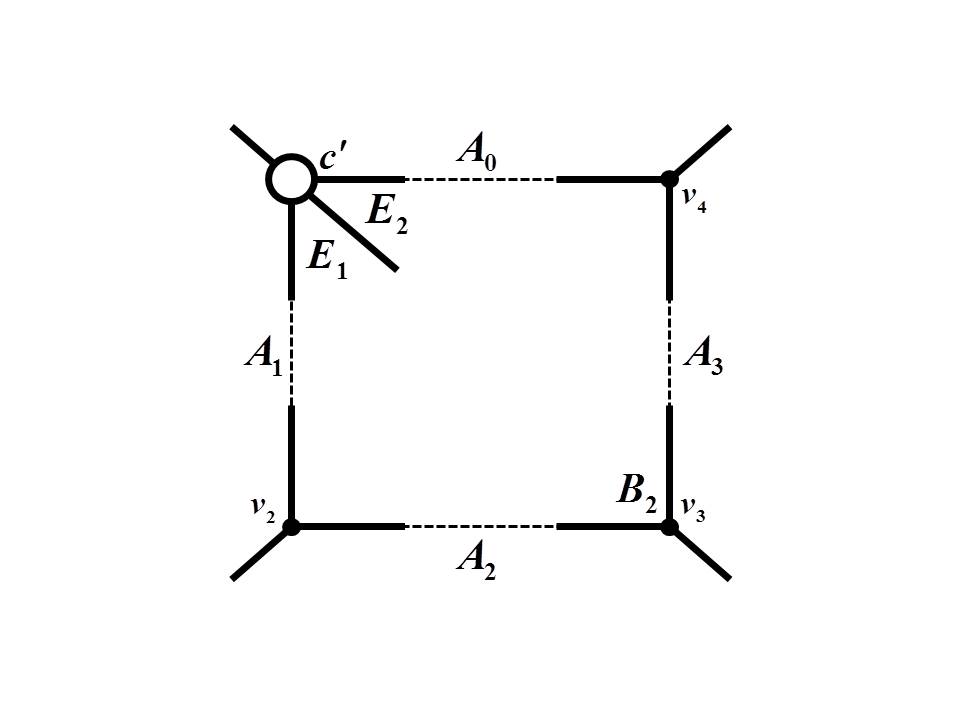}
\end{center}
\caption{The case $($\ref{sec:3-1-3}$)$}
\label{fig:4-3}
\end{figure}

\subsection{An estimate for $a_3^2(Q^6)$ and the conclusion of the proof of the Main Theorem.}
\ 

\indent Denote one cusp of $Q^6$ by $c$. 
There are exactly ten $5$-dimensional faces of $Q^6$ which share $c$. 
Denote these $5$-dimensional faces by $P_i$ $(i=1$, $\cdots $, $10)$. 
Assume that $P_i$ and $P_{11-i}$ $(1\leq i\leq 10)$ are parallel. 
It is easy to see that any $k$-dimensional face having $c$ 
is the intersection of $(6-k)$ $5$-dimensional faces. 
Thus there are eighty $3$-dimensional faces which have $c$. 
Assume that the $3$-dimensional face $P_1\cap P_2\cap P_3$ has two cusps. 

\begin{lem}\label{lem:4-1} 
If there is a $3$-dimensional face which has two cusps, 
then this face and $P_1\cap P_2\cap P_3 $ must satisfy one of the following cases: 

$(1)$ They are the same $3$-dimensional face. 

$(2)$ They have one common $2$-dimensional face 
which has two cusps but do not have an edge which joins these two cusps. 

$(3)$ They have one common edge which starts at one cusp and terminates at the other cusp. 
\end{lem}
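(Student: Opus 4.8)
The plan is to read everything off the local combinatorial structure of $Q^6$ at its two cusps. Write $c$ and $c'$ for the two cusps of $Q^6$. Since $Q^6$ has exactly two cusps, any $3$-dimensional face of $Q^6$ with two cusps has precisely $c$ and $c'$ as its cusps; this applies to $G:=P_1\cap P_2\cap P_3$ by hypothesis, and also to any other $3$-dimensional face $G'$ with two cusps, so that $c$ and $c'$ belong to both $G$ and $G'$, hence to $G\cap G'$. Because $Q^6$ is almost simple, each of its $3$-dimensional faces is the intersection of exactly three hyperfaces, and if a cusp lies on such a face then all three of these hyperfaces pass through that cusp. By $($P3$)$ there are exactly ten hyperfaces of $Q^6$ through $c$, and by $($P2$)$ they fall into five pairs of mutually parallel hyperfaces. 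Choosing the labelling so that $G=P_1\cap P_2\cap P_3$, we then have $G'=P_a\cap P_b\cap P_c$ for some three indices $a,b,c$, lying in three distinct parallel pairs.

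The heart of the argument is a counting step. Let $S$ be the set of hyperfaces of $Q^6$ passing through both $c$ and $c'$. Then $\{P_1,P_2,P_3\}\subseteq S$ and $\{P_a,P_b,P_c\}\subseteq S$, because $G$ and $G'$ contain $c'$. Moreover $|S|\le 5$: if two members of $S$ were parallel with respect to $c$ they would constitute one of the five parallel pairs through $c$, so the intersection of their Euclidean closures would be the single cusp $c$; but both would also contain $c'\ne c$, which is absurd. Hence each parallel pair through $c$ meets $S$ in at most one hyperface, giving $|S|\le 5$.

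Thus $\{P_1,P_2,P_3\}$ and $\{P_a,P_b,P_c\}$ are two three-element subsets of a set of size at most five, so they overlap in $1$, $2$ or $3$ hyperfaces. Since the intersection of two faces of a convex polyhedron is the face cut out by all the hyperfaces containing either of them, $G\cap G'$ is the intersection of exactly $6-|\{P_1,P_2,P_3\}\cap\{P_a,P_b,P_c\}|\in\{3,4,5\}$ hyperfaces, all through $c$ and pairwise non-parallel with respect to $c$ (being contained in $S$). In the upper half-space model with $c$ sent to infinity these hyperfaces become vertical Euclidean hyperplanes; since $Q^6$ is right-angled they are mutually perpendicular apart from the five parallel pairs, so they are the bounding hyperplanes of a combinatorial $5$-cube, and any $k$ of them from distinct pairs meet in a face of $Q^6$ of dimension $6-k$. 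Consequently $G\cap G'$ has dimension $3$, $2$ or $1$ according as the overlap has size $3$, $2$ or $1$. If it is $3$, then $G'=G$ and we are in case $(1)$. If it is $2$, then $G\cap G'$ is a $2$-dimensional face common to $G$ and $G'$; it contains the two cusps $c$ and $c'$, so it is a $2$-dimensional face with two cusps, which is case $(2)$ (one may further identify it, using the three cases (\ref{sec:3-1-1})--(\ref{sec:3-1-3}) of Section 4 for the combinatorics of $G$, and in particular see that $G\cap G'$ is not itself an edge joining the two cusps). If it is $1$, then $G\cap G'$ is a common edge of $G$ and $G'$; its Euclidean closure contains the two distinct cusps $c$ and $c'$, and since an edge has exactly two endpoints with interior inside $\mathbb{H}^6$, these endpoints are $c$ and $c'$, so $G\cap G'$ is the edge joining the two cusps, which is case $(3)$.

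The step I expect to need the most care is the dimension bookkeeping in the previous paragraph: one must verify that the four (respectively five) hyperfaces cutting out $G\cap G'$ in the overlap-$2$ (respectively overlap-$1$) situation genuinely lie in four (respectively five) distinct parallel pairs, so that $G\cap G'$ attains the predicted dimension and does not drop lower. This is exactly what the bound $|S|\le 5$ delivers, since it forces the relevant hyperfaces into distinct pairs and hence into general position near $c$; once this is settled, the trichotomy and its identification with cases $(1)$, $(2)$, $(3)$ follow.
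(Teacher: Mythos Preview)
Your proof is correct and follows essentially the same approach as the paper's: both arguments hinge on the observation that at most one hyperface from each of the five parallel pairs through $c$ can also pass through $c'$, which forces the triples $\{P_1,P_2,P_3\}$ and $\{P_a,P_b,P_c\}$ to overlap. Your packaging via the set $S$ with $|S|\le 5$, together with the explicit dimension count for $G\cap G'$, is in fact somewhat cleaner and more complete than the paper's version, which only argues that at least one of $i,j,k$ lies in $\{1,2,3\}$ and then declares this ``sufficient'' without spelling out the trichotomy.
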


\begin{proof}
Denote the $3$-dimensional face which has two cusps by $P_i\cap P_j\cap P_k$ 
$(i$, $j$ and $k$ are different$)$. 
It is sufficient to prove that $P_i\cap P_j\cap P_k$ 
is a $3$-dimensional face of either $P_1$, $P_2$ or $P_3$. 
If this $3$-dimensional face is a $3$-dimensional face of either $P_8$, $P_9$ or $P_{10}$, 
then it is parallel to $P_1\cap P_2\cap P_3$. 
But in this case, $P_i\cap P_j\cap P_k$ must have exactly one cusp. 
Thus $P_i\cap P_j\cap P_k$ is not a $3$-dimensional face of either $P_8$, $P_9$ or $P_{10}$. 
Because $P_5$ and $P_6$ are parallel, if $P_i\cap P_j\cap P_k$ is 
a $3$-dimensional face of either $P_5$ or $P_6$, then it cannot be a $3$-dimensional face of the other one. 
Thus $P_i\cap P_j\cap P_k$ is a $3$-dimensional face of either $P_1$, $P_2$ or $P_3$. 
\end{proof}

To prove the main theorem, we need to prove the following lemma. 
\begin{lem}\label{lem:4-2}
Assume that three $3$-dimensional faces of $Q^6$ have a common cusp, and they do not share the other one. 
If these $3$-dimensional faces are pairwise adjacent, 
then one of them has more than twelve $2$-dimensional faces. 
\end{lem}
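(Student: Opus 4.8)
The plan is to argue by contradiction: assume that each of the three $3$-dimensional faces, call them $G_1$, $G_2$, $G_3$, has at most twelve $2$-dimensional faces, and derive a contradiction. In the situation at hand $Q^6$ has exactly two cusps, say $c$ and $c'$, where $c$ is the common cusp of $G_1$, $G_2$, $G_3$. Since the three faces do not all contain $c'$, one of them, say $G_1$, has $c$ as its only cusp, so $c(G_1)\le 1$; by Corollary \ref{cor:2-1} this gives $a_2(G_1)\ge 12$, hence $a_2(G_1)=12$, and Lemma \ref{lem:3-1} applies: after relabelling, the four $2$-dimensional faces of $G_1$ through $c$, in cyclic order, are a quadrangle, a pentagon, a quadrangle, a pentagon.

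I would next analyse how $G_2$ and $G_3$ sit against $G_1$ along $c$. Each $3$-dimensional face of $Q^6$ through $c$ is the intersection of three of the ten hyperfaces of $Q^6$ passing through $c$, and its four $2$-dimensional faces through $c$ are obtained by intersecting it with one hyperface from each of the two remaining parallel pairs; in the cyclic order of Lemma \ref{lem:3-1} these four $2$-faces are precisely the two parallel pairs $\{A_0,A_2\}$ and $\{A_1,A_3\}$. As $G_2$ is adjacent to $G_1$, the face $G_1\cap G_2$ is one of these four, and $G_2$ is obtained from $G_1$ by replacing one of its three defining hyperfaces by the appropriate hyperface; likewise for $G_3$. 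I would then rule out the possibility that $G_1\cap G_2$ and $G_1\cap G_3$ form one of the parallel pairs: if they did, then $G_2$ and $G_3$ would differ only by an interchange of a pair of parallel hyperfaces, so $G_2\cap G_3$ would lie in two parallel hyperplanes and be empty, and $G_2$, $G_3$ could not be adjacent. Hence $G_1\cap G_2$ and $G_1\cap G_3$ are either equal or consecutive in the cyclic order of $G_1$; in the consecutive case, by Lemma \ref{lem:3-1}, one of them is a quadrangle and the other a pentagon.

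The core of the argument is then a short count, valid as soon as $G_2$ and $G_3$ also have $c$ as their only cusp and exactly twelve $2$-faces. Writing $p$, $q$, $r$ for the numbers of edges of $G_1\cap G_2$, $G_1\cap G_3$, $G_2\cap G_3$, the structure of $G_1$ forces $\{p,q\}=\{4,5\}$; applying the same ``consecutive $2$-faces through $c$ have different numbers of edges'' statement inside $G_2$ gives $\{p,r\}=\{4,5\}$, and inside $G_3$ gives $\{q,r\}=\{4,5\}$. Together these require $q=r$ and $q\ne r$, which is impossible. So, in this case, $G_1$, $G_2$, $G_3$ cannot all have at most twelve $2$-faces, which is what we want.

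It remains to dispose of two configurations, and the second is where I expect the real difficulty. The first is the degenerate case $G_1\cap G_2=G_1\cap G_3=G_2\cap G_3$: then the three faces have a common $2$-dimensional face $S$ through $c$, $S$ lies in at most four $3$-dimensional faces of $Q^6$, and I would derive a contradiction from the cyclic $A_0,\dots,A_3$ configuration of $G_1$ together with Theorem \ref{thm:And}$(e)$. The second, harder, case is when $G_2$ or $G_3$ carries the second cusp $c'$, so that Lemma \ref{lem:3-1} is unavailable for it; then one has to redo, for such a face, the type of case-by-case examination performed in Subsections \ref{sec:3-1-1}--\ref{sec:3-1-3}, now carried out relative to the single cusp $c$, in order to show that its four $2$-faces through $c$ still contain a quadrangle and a pentagon in consecutive positions --- after which the count of the previous paragraph goes through unchanged. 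Pinning down the combinatorics of a $3$-face that carries two cusps simultaneously is the main obstacle.
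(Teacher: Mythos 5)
Your central count is precisely the paper's proof: assuming all three faces have exactly twelve $2$-dimensional faces, the ``moreover'' clause of Lemma \ref{lem:3-1} forces the two faces $G_i\cap G_j$ and $G_i\cap G_k$, viewed inside $G_i$ as adjacent $2$-faces through the common cusp, to be one quadrangle and one pentagon, and the three conditions $\{p,q\}=\{p,r\}=\{q,r\}=\{4,5\}$ are jointly inconsistent. Your preliminary structural observations (a $3$-face through $c$ is an intersection of three of the ten hyperfaces through $c$, and $G_1\cap G_2$, $G_1\cap G_3$ cannot be the parallel pair, else $G_2$ and $G_3$ would lie in parallel hyperplanes and could not be adjacent) are also correct and are implicit in the paper.

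The case you single out as the real difficulty --- one of $G_2$, $G_3$ carrying the second cusp $c'$ --- does not need to be treated, because it is excluded by the hypothesis as the paper reads and uses it. ``They do not share the other one'' is meant to say that none of the three faces contains the second cusp, i.e.\ each has only the common cusp; this is how the paper's own proof starts (it applies Lemma \ref{lem:3-1}, hence Corollary \ref{cor:2-1}, to all three faces at once), and it is the only situation in which the lemma is ever invoked: the triples in Tables 1 and 2 are explicitly drawn from the list of $3$-dimensional faces having exactly one cusp. So no analogue of the analysis of Subsections \ref{sec:3-1-1}--\ref{sec:3-1-3} is required, and your ``main obstacle'' is vacuous. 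One genuine point you raise that the paper also passes over in silence is the degenerate configuration $G_1\cap G_2=G_1\cap G_3=G_2\cap G_3$ (three $3$-faces sharing a single $2$-face are pairwise adjacent, yet the alternation argument then gives no contradiction); the paper tacitly assumes the three pairwise intersections are distinct when it calls them ``adjacent'' inside $G_3$. In every application the three faces have the form $P_i\cap P_j\cap P_a$, $P_i\cap P_j\cap P_b$, $P_i\cap P_j\cap P_c$ with $P_a$, $P_b$, $P_c$ distinct and pairwise non-parallel, so the three intersections are automatically distinct and this case never occurs; but if you want the lemma in the generality in which it is stated, an extra argument is needed there, both in your write-up and in the paper's.
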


\begin{proof}
Let $J_1$, $J_2$ and $J_3$ be the three $3$-dimensional faces 
which have a common cusp, and do not share the other one. 
By the assumption, $J_1$, $J_2$ and $J_3$ are pairwise adjacent. 
By Lemma \ref{lem:3-1}, each of these faces has at least twelve $2$-dimensional faces. 
Assume that one of these $3$-dimensional faces 
has exactly twelve $2$-dimensional faces. 
In this case, the intersection $J_1\cap J_2$ is a quadrangle or a pentagon. 

\indent Assume that $J_1\cap J_2$ is a quadrangle. 
Because the intersection $J_1\cap J_3$ is a $2$-dimensional face of $J_1$, 
and is adjacent to $J_1\cap J_2$, $J_1\cap J_3$ must be a pentagon by Lemma \ref{lem:3-1}. 
The $2$-dimensional face $J_2\cap J_3$ must be a pentagon 
because the intersection $J_2\cap J_3$ is a $2$-dimensional face of $J_2$, 
and it is adjacent to $J_1\cap J_2$. 
The intersections $J_1\cap J_3$ and $J_2\cap J_3$ are a $2$-dimensional faces of $J_3$. 
Note that these two faces are adjacent. 
Thus $J_3$ must have more than twelve $2$-dimensional faces by Lemma \ref{lem:3-1}. 
But $J_3$ has only twelve $2$-dimensional faces. Hence $J_1\cap J_2$ is not a quadrangle. 
Thus $J_1\cap J_2$ is a pentagon. 
But as we have seen above, $J_1\cap J_3$ and $J_2\cap J_3$ are quadrangles, 
and thus $J_3$ must have more than twelve $2$-dimensional faces. 
Hence either $J_1$, $J_2$ or $J_3$ must have more than twelve $2$-dimensional faces. 
\end{proof}

\indent We consider the case $($\ref{sec:3-1-1}$)$ again. 
By Lemma \ref{lem:4-1}, there is exactly one $3$-dimensional face which has two cusps. 
We may assume that this $3$-dimensional face is the intersection of three $5$-dimensional faces 
$P_1$, $P_2$ and $P_3$. This $3$-dimensional face has more than seven $2$-dimensional faces 
by Proposition \ref{pro:2-2}. 
Moreover, by Corollary \ref{cor:2-1}, this face is the only $3$-dimensional face 
which has less than twelve $2$-dimensional faces. 
By Theorem \ref{thm:2-4}, we obtain the following inequality; 
\begin{equation}\label{eq:1} 
a_3^2(Q^6)<12. 
\end{equation}
If there are more than three $3$-dimensional faces which have more than twelve $2$-dimensional faces each, 
then $Q^6$  cannot satisfy $($\ref{eq:1}$)$. 
By Lemma \ref{lem:4-2}, one of the $3$-dimensional faces 
$P_1\cap P_2\cap P_4$, $P_1\cap P_2\cap P_5$ and $P_1\cap P_2\cap P_8$ 
has more than twelve $2$-dimensional faces. 
Similarly, one of the $3$-dimensional faces 
$P_1\cap P_4\cap P_5$, $P_2\cap P_4\cap P_5$ and $P_4\cap P_5\cap P_8$ 
has more than twelve $2$-dimensional faces. 
And one of the $3$-dimensional faces 
$P_1\cap P_4\cap P_6$, $P_2\cap P_4\cap P_6$ and $P_4\cap P_6\cap P_8$ 
has more than twelve $2$-dimensional faces. 
In addition, one of the $3$-dimensional faces 
$P_1\cap P_3\cap P_9$, $P_1\cap P_4\cap P_9$ and $P_1\cap P_5\cap P_9$ 
has more than twelve $2$-dimensional faces. 
Thus there are more than three $3$-dimensional faces which have more than twelve $2$-dimensional faces. 
Hence, there is no polyhedron $Q^6$ corresponding to the case $($\ref{sec:3-1-1}$)$. 

\indent We consider the case $($\ref{sec:3-1-2}$)$: 
only one $2$-dimensional face has two cusps. 
We can express this $2$-dimensional face as the intersection of 
four $5$-dimensional faces $P_1$, $P_2$, $P_3$ and $P_4$. 
There are only four $3$-dimensional faces 
$P_1\cap P_2\cap P_3$, $P_1\cap P_2\cap P_4$, $P_1\cap P_3\cap P_4$ and $P_2\cap P_3\cap P_4$ 
which have two cusps. 
Note that each of these $3$-dimensional faces has more than eight $2$-dimensional faces. 
By the same reason as in the case $($\ref{sec:3-1-1}$)$, 
if there are at least twelve $3$-dimensional faces 
which have more than twelve $2$-dimensional faces each, 
then $Q^6$ does not satisfy inequality $($\ref{eq:1}$)$. 
Table 1 is a part of the list of $3$-dimensional faces which have exactly one cusp. 
Any three $3$-dimensional faces in the same row are pairwise adjacent. 
By Lemma \ref{lem:4-2}, at least one of the faces in a row has more than twelve $2$-dimensional faces. 
Thus there exist at least twelve $3$-dimensional faces 
which have more than twelve $2$-dimensional faces each. 
Hence $Q^6$ does not satisfy inequality $($\ref{eq:1}$)$. 

\indent We consider the case $($\ref{sec:3-1-3}$)$: 
exactly one edge connects two cusps. 
Denote this edge by $P_1\cap P_2\cap P_3\cap P_4\cap P_5$. 
Then there are exactly ten $3$-dimensional faces 
$P_1\cap P_2\cap P_3$, $P_1\cap P_2\cap P_4$, $P_1\cap P_2\cap P_5$, 
$P_1\cap P_3\cap P_4$, $P_1\cap P_3\cap P_5$, $P_1\cap P_4\cap P_5$, 
$P_2\cap P_3\cap P_4$, $P_2\cap P_3\cap P_5$, $P_2\cap P_4\cap P_5$ and $P_3\cap P_4\cap P_5$ 
which have two cusps. 
Note that each of these $3$-dimensional faces has at least ten $2$-dimensional faces. 
By the same reason in the cases $($\ref{sec:3-1-1}-\ref{sec:3-1-2}$)$, 
if there are more than twenty $3$-dimensional faces which have more than twelve $2$-dimensional faces each, 
then $Q^6$ does not satisfy inequality $($\ref{eq:1}$)$. 
Table 2 is a part of the list of $3$-dimensional faces 
any three of which in the same row are pairwise adjacent, analogous to Table 1. 

\indent By Lemma \ref{lem:4-2}, there exist at least twenty $3$-dimensional faces 
which have more than twelve $2$-dimensional faces. 
Thus $Q^6$ does not satisfy inequality $($\ref{eq:1}$)$. 

\indent Thus neither of the cases $($\ref{sec:3-1-1}-\ref{sec:3-1-3}$)$ satisfies  inequality $($\ref{eq:1}$)$. 
Thus $c(Q^6)\not=2$. Hence $c(Q^6)\geq 3$. 
\clearpage
\begin{table}[htp]
\begin{center}
\setlength{\tabcolsep}{10pt}
\footnotesize
\begin{tabular}{|l|l|l|}
\hline 
$P_1\cap P_2\cap P_6$ & $P_1\cap P_2\cap P_7$ & $P_1\cap P_2\cap P_8$ \\ \hline 
$P_1\cap P_3\cap P_6$ & $P_1\cap P_3\cap P_7$ & $P_1\cap P_3\cap P_9$ \\ \hline 
$P_1\cap P_4\cap P_6$ & $P_1\cap P_4\cap P_8$ & $P_1\cap P_4\cap P_9$ \\ \hline 
$P_1\cap P_5\cap P_7$ & $P_1\cap P_5\cap P_8$ & $P_1\cap P_5\cap P_9$ \\ \hline 
$P_2\cap P_3\cap P_6$ & $P_2\cap P_3\cap P_7$ & $P_2\cap P_3\cap P_{10}$ \\ \hline 
$P_2\cap P_4\cap P_6$ & $P_2\cap P_4\cap P_8$ & $P_2\cap P_4\cap P_{10}$ \\ \hline 
$P_2\cap P_5\cap P_7$ & $P_2\cap P_5\cap P_8$ & $P_2\cap P_5\cap P_{10}$ \\ \hline 
$P_3\cap P_4\cap P_6$ & $P_3\cap P_4\cap P_9$ & $P_3\cap P_4\cap P_{10}$ \\ \hline 
$P_3\cap P_5\cap P_7$ & $P_3\cap P_5\cap P_9$ & $P_3\cap P_5\cap P_{10}$ \\ \hline 
$P_4\cap P_5\cap P_8$ & $P_4\cap P_5\cap P_9$ & $P_4\cap P_5\cap P_{10}$ \\ \hline 
$P_2\cap P_6\cap P_{10}$ & $P_2\cap P_7\cap P_{10}$ & $P_2\cap P_8\cap P_{10}$ \\ \hline
$P_3\cap P_6\cap P_{10}$ & $P_3\cap P_7\cap P_{10}$ & $P_3\cap P_9\cap P_{10}$ \\ \hline
\end{tabular}
\caption{$3$-dimensional faces which satisfy the conditions of Lemma \ref{lem:4-2} in the case $($\ref{sec:3-1-2}$)$}
\end{center}
\end{table}
\begin{table}[htp]
\begin{center}
\setlength{\tabcolsep}{10pt}
\footnotesize
\begin{tabular}{|l|l|l|}
\hline 
$P_1\cap P_2\cap P_6$ & $P_1\cap P_2\cap P_7$ & $P_1\cap P_2\cap P_8$ \\ \hline 
$P_1\cap P_3\cap P_6$ & $P_1\cap P_3\cap P_7$ & $P_1\cap P_3\cap P_9$ \\ \hline 
$P_1\cap P_4\cap P_6$ & $P_1\cap P_4\cap P_8$ & $P_1\cap P_4\cap P_9$ \\ \hline 
$P_1\cap P_5\cap P_7$ & $P_1\cap P_5\cap P_8$ & $P_1\cap P_5\cap P_9$ \\ \hline 
$P_2\cap P_3\cap P_6$ & $P_2\cap P_3\cap P_7$ & $P_2\cap P_3\cap P_{10}$ \\ \hline 
$P_2\cap P_4\cap P_6$ & $P_2\cap P_4\cap P_8$ & $P_2\cap P_4\cap P_{10}$ \\ \hline 
$P_2\cap P_5\cap P_7$ & $P_2\cap P_5\cap P_8$ & $P_2\cap P_5\cap P_{10}$ \\ \hline 
$P_3\cap P_4\cap P_6$ & $P_3\cap P_4\cap P_9$ & $P_3\cap P_4\cap P_{10}$ \\ \hline 
$P_3\cap P_5\cap P_7$ & $P_3\cap P_5\cap P_9$ & $P_3\cap P_5\cap P_{10}$ \\ \hline 
$P_4\cap P_5\cap P_8$ & $P_4\cap P_5\cap P_9$ & $P_4\cap P_5\cap P_{10}$ \\ \hline 
$P_2\cap P_6\cap P_{10}$ & $P_2\cap P_7\cap P_{10}$ & $P_2\cap P_8\cap P_{10}$ \\ \hline
$P_3\cap P_6\cap P_{10}$ & $P_3\cap P_7\cap P_{10}$ & $P_3\cap P_9\cap P_{10}$ \\ \hline
$P_4\cap P_6\cap P_{10}$ & $P_4\cap P_8\cap P_{10}$ & $P_4\cap P_9\cap P_{10}$ \\ \hline
$P_5\cap P_7\cap P_{10}$ & $P_5\cap P_8\cap P_{10}$ & $P_5\cap P_9\cap P_{10}$ \\ \hline
$P_1\cap P_6\cap P_9$ & $P_1\cap P_7\cap P_9$ & $P_1\cap P_8\cap P_9$ \\ \hline
$P_6\cap P_9\cap P_{10}$ & $P_7\cap P_9\cap P_{10}$ & $P_8\cap P_9\cap P_{10}$ \\ \hline
$P_1\cap P_7\cap P_8$ & $P_2\cap P_7\cap P_8$ & $P_5\cap P_7\cap P_8$ \\ \hline
$P_6\cap P_7\cap P_8$ & $P_7\cap P_8\cap P_9$ & $P_7\cap P_8\cap P_{10}$ \\ \hline
$P_1\cap P_6\cap P_7$ & $P_2\cap P_6\cap P_7$ & $P_3\cap P_6\cap P_7$ \\ \hline
$P_2\cap P_6\cap P_8$ & $P_4\cap P_6\cap P_8$ & $P_6\cap P_8\cap P_{10}$ \\ \hline
\end{tabular}
\caption{$3$-dimensional faces which satisfy the conditions of Lemma \ref{lem:4-2} in the case 
$($\ref{sec:3-1-3}$)$}\end{center}
\end{table}

\section{Proof of the Main Theorem: $n=7$}

\indent Assume that $Q^7$ has exactly $m$ cusps. 
Fix one cusp of $Q^7$, and denote it by $c$. 
It is clear that there are twelve $6$-dimensional faces which share $c$. 
Denote these faces by $E_i$ $(1\leq i\leq 7)$, and assume that 
$E_i$ and $E_j$ are parallel if $i+j=13$. 
Any of the $3$-dimensional faces which have $c$ 
is represented by the intersection of four $6$-dimensional faces which have $c$. 
If a $3$-dimensional face belongs to $E_i$, 
then it is not a $3$-dimensional face of $E_{13-i}$. 
Thus the number of $3$-dimensional faces of $Q^7$ which have $c$ is 
$\frac{12\times 10\times 8\times 6}{4\times 3\times 2\times 1}$, 
i.e. two hundreds and forty $3$-dimensional faces. 
By analogy to Lemma \ref{lem:4-1}, we can prove the following statement. 

\begin{lem}\label{lem:5-1} 
If there are two different $3$-dimensional faces of $Q^7$ which share two cusps, 
then these faces either have a common $2$-dimensional face which has both cusps 
$($but no common edge joins the cusps$)$, 
or have a common edge which starts at one cusp and terminates at the other. 
\end{lem}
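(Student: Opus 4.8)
The plan is to mimic the proof of Lemma~\ref{lem:4-1}, working now with the twelve $6$-dimensional faces $E_1,\dots,E_{12}$ through a fixed cusp of $Q^7$ and with the fact that every $3$-dimensional face containing that cusp is cut out by four of the $E_i$, no two of which are parallel. Let $F$ and $F'$ be two distinct $3$-dimensional faces of $Q^7$ sharing two cusps; call these cusps $c$ and $c'$, and set up the parametrization $E_1,\dots,E_{12}$ at $c$ (so $E_i\parallel E_j$ exactly when $i+j=13$). Write $F=\bigcap_{i\in I}E_i$ and $F'=\bigcap_{i\in I'}E_i$ with $|I|=|I'|=4$, with no two indices of $I$ complementary, and likewise for $I'$.

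First I would note that, since $F\subseteq E_i$ for each $i\in I$, the Euclidean closure of $F$ lies in that of $E_i$, so $c'\in\overline{E_i}$; hence $I\subseteq S:=\{\,i : c'\in\overline{E_i}\,\}$, and likewise $I'\subseteq S$. The crucial observation is that $S$ contains no two complementary indices: if $i,13-i\in S$ then $E_i$ and $E_{13-i}$ are parallel, so the intersection of their Euclidean closures is a single cusp, which must be $c$; but it would also contain $c'\neq c$, a contradiction. Consequently $|S|\le 6$, so $|I\cap I'|\ge|I|+|I'|-|S|\ge 2$. Since $I$ and $I'$ are sets, $|I\cap I'|=4$ would give $I=I'$ and hence $F=F'$, which is excluded; therefore $|I\cap I'|\in\{2,3\}$.

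Next I would identify $F\cap F'$. No two indices of $I\cup I'$ are complementary, since any such pair would lie in $S$; hence $F\cap F'=\bigcap_{i\in I\cup I'}E_i$ is an intersection of $|I\cup I'|=8-|I\cap I'|$ pairwise non-parallel hyperfaces through $c$. By the local combinatorial structure at a cusp of a right-angled polyhedron --- every $k$-face through $c$ is cut out by $7-k$ of the hyperfaces through $c$, pairwise non-parallel, and conversely such an intersection is a $k$-face having $c$ in its Euclidean closure --- it follows that $F\cap F'$ is a $2$-dimensional face when $|I\cap I'|=3$ and an edge when $|I\cap I'|=2$. Since $I\cup I'\subseteq S$, all the hyperfaces involved also pass through $c'$, so the same analysis at the cusp $c'$ shows that $c'$ likewise lies in the closure of $F\cap F'$. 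Thus if $|I\cap I'|=2$, then $F\cap F'$ is an edge whose closure contains the two distinct cusps $c$ and $c'$, hence an edge joining them; and if $|I\cap I'|=3$, then $F\cap F'$ is a common $2$-dimensional face containing both cusps (and should this $2$-face itself contain an edge joining $c$ to $c'$, that edge is then a common edge of $F$ and $F'$ of the required type). In all cases the conclusion of the lemma holds.

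The bookkeeping --- writing a $3$-face through a cusp as an intersection of four hyperfaces and checking that no two are parallel --- is exactly as in the proof of Lemma~\ref{lem:4-1}. The point that really needs care is the converse used above: that an intersection of $7-k$ pairwise non-parallel hyperfaces through a cusp is genuinely a $k$-dimensional face with that cusp in its closure, and not something of smaller dimension, or something that misses the cusp. This is the assertion that the horospherical cross-section at a cusp of $Q^7$ is a combinatorial $6$-cube whose facets are the traces of $E_1,\dots,E_{12}$ --- the same structure underlying the count of $240$ three-dimensional faces made just above --- and once it is granted, the dichotomy $|I\cap I'|\in\{2,3\}$ translates directly into the two alternatives of the lemma.
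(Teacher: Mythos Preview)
Your argument is correct and follows exactly the approach the paper indicates: the paper gives no separate proof of Lemma~\ref{lem:5-1}, merely stating that it is proved ``by analogy to Lemma~\ref{lem:4-1}'', and your write-up is precisely that analogue, carried out in dimension~$7$. Your device of introducing the set $S=\{\,i : c'\in\overline{E_i}\,\}$ and using $|I\cap I'|\ge |I|+|I'|-|S|\ge 2$ is a tidy repackaging of the case analysis in the paper's proof of Lemma~\ref{lem:4-1} (eliminating indices parallel to $\{1,2,3\}$, then bounding the remaining choices), but the underlying idea is identical.
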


Note that any edge of $Q^7$ is given by the intersection of six $6$-dimensional faces of $Q^7$. 
Thus if we fix one edge of $Q^7$, then the number of $3$-dimensional faces of $Q^7$ 
which have this edge is $\binom{6}{4}=15$. 
Thus, by Lemma \ref{lem:5-1}, 
there are no more than fifteen $3$-dimensional faces which have two common cusps. 

\indent Fix one $3$-dimensional face and denote it by $F$. 
Assume that $F$ has $l$ cusps. 
By Lemma \ref{lem:5-1}, there are at most $15\binom{m-l}{2}$ $3$-dimensional faces 
which have more than two cusps which do not belong to $F$. 
By Proposition \ref{pro:2-2}, 
each of these $3$-dimensional faces has at least six $2$-dimensional faces. 
In addition, by Lemma \ref{lem:5-1}, there are at most $15l(m-l)$ $3$-dimensional faces 
which have one common cusp with $F$, and have one cusp which $F$ does not have. 
By the same reason as above, each of these faces has at least six $2$-dimensional faces. 
By Lemma \ref{lem:5-1}, there are at most $14\binom{l}{2}$ $3$-dimensional faces 
which have at least two common cusps with $F$. 

\indent By the second inequality of Proposition \ref{pro:2-2}, 
each of these $3$-dimensional faces has at least six $2$-dimensional faces. 
But if a $3$-dimensional face has exactly six $2$-dimensional faces, 
by the third inequality of Proposition \ref{pro:2-2}, 
it must have more than two cusps. 
Thus if one of those $3$-dimensional faces has only the cusps that $F$ has, 
then this $3$-dimensional face and $F$ have a common $2$-dimensional face 
which has more than two cusps. Denote this $3$-dimensional face by $F'$. 
There are at least three $2$-dimensional faces of $F'$ 
which are parallel to $F\cap F'$. 
In addition, when we look at one cusp of $F\cap F'$, there are two faces 
which are adjacent to $F\cap F'$. 
Because $F'$ has exactly six $2$-dimensional faces, these two $2$-dimensional faces 
which share a cusp with $F\cap F'$ and are adjacent to $F\cap F'$ 
have every cusp of $F\cap F'$. The latter is impossible. 
Thus $F'$ must have more than six $2$-dimensional faces in this case. 
Thus, any $3$-dimensional face which shares all the cusps with 
$F$ has more than six $2$-dimensional faces. 

\indent Fix one cusp of $Q^7$, and denote it by $c_1$. 
In this case, we note that there are at least $(240-15(m-1))$ $3$-dimensional faces 
which have only one cusp $c_1$ because the number of 3-dimensional faces 
which have at least two cusps including $c_1$ is at most $15(m-1)$. 
Because the number of these $3$-dimensional faces 
which have only the cusp $c_1$ is positive, 
$m$ must be smaller than $17$. 
By Corollary \ref{cor:2-1}, 
each of these $3$-dimensional faces has at least twelve $2$-dimensional faces. Thus, 

\begin{eqnarray*} 
a_3^2(Q^7) & \geq & {\scriptstyle \frac{1}{a_3(Q^7)}
\left( 6\times 15\binom{m-l}{2}+6\times 15l(m-l)+7\times 14\binom{l}{2}+12(240-15(m-1))\right. } \\
& & {\scriptstyle \left. + 12(a_3(Q^7)-(15\binom{m-l}{2}+ 15l(m-l) +14\binom{l}{2}+ (240-15(m-1))))\right) } \\ 
& \geq & {\scriptstyle \frac{1}{a_3(Q^7)} \left( (9-3)\times 15\binom{m-l}{2}+(9-3)\times 15l(m-l)+(9-2)\times 
14\binom{l}{2}+ (9+3)(240-15(m-1)) \right. } \\
& & \left. {\scriptstyle +9(a_3(Q^7)-(15\binom{m-l}{2}+ 15l(m-l) +14\binom{l}{2}+ (240-15(m-1))))}\right) \\ 
& = & 9+{\scriptstyle \frac{-3\times 15\binom{m-l}{2}-3\times 15l(m-l)-2\times 14\binom{l}{2}+ 3(240-15(m-1))}{a_3(Q^7)}}. 
\end{eqnarray*}
On the other hand, by Theorem \ref{thm:2-4}, we obtain $a_3^2(Q^7)<9$. 
In order for $Q^7$ to satisfy all the above inequalities, the following must hold: 
\begin{equation*}
-3\times 15\binom{m-l}{2}-3\times 15l(m-l)-2\times 14\binom{l}{2}+3(240-15(m-1))<0. 
\end{equation*}
Fix another cusp of $Q^7$, and denote it by $c_2$. 
The number of $3$-dimensional faces which have exactly one cusp $c_2$ is at least $240-15(m-2)$. 
By analogy to the above, we obtain: 
\begin{equation*}
-3\times 15\binom{m-l}{2}-3\times 15l(m-l)-2\times 14\binom{l}{2}+3(240-15(m-1))+3(240-15(m-2))<0. 
\end{equation*}
By proceeding in this way, because $Q^7$ has $m$ cusps, we see that 
\begin{eqnarray*}
-3\times 15\binom{m-l}{2}-3\times 15l(m-l)-2\times 14\binom{l}{2}  
&+3(240-15(m-1))& \\ &+3(240-15(m-2))& \\ &\cdots & \\ &+3(240-15\times 1)& <0. 
\end{eqnarray*}
We simplify the left-hand side of this inequality, and finally obtain 
\begin{equation*}
17l^2-17l-90m^2+1530m-1440<0. 
\end{equation*}
The left-hand side is an increasing function of $l$. 
Because these exist a face $F$ with $l\geq 2$ cusps, by substituting $l=2$ in the above  inequality, we get 
\begin{equation*}
90m^2-1530m+1406>0. 
\end{equation*}
To satisfy this inequality $m$ must be greater than or equal to 17. A contradiction. 
Hence $c(Q^7)\geq 17$.

\section{Proof of the Main Theorem: $8\leq n\leq 12$}
Our main theorem for $8\leq n\leq 12$ comes from the following lemma. 

\begin{lem}\label{lem:6-1}
For $8\leq n\leq 12$, if any $(n-1)$-dimensional face of $Q^n$ has at least $m$ cusps, then 
\begin{equation*}
c(Q^n)\geq 3m-2n+1. 
\end{equation*}
\end{lem}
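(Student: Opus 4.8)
The plan is to follow the template of the proofs for $n=6$ and $n=7$: combine Nikulin's inequality (Theorem \ref{thm:2-4}), which bounds the averaged face number $a_3^2(Q^n)$ from above, with Corollary \ref{cor:2-1}, which forces $a_2(R)\ge 12$ for every $3$-dimensional face $R$ with at most one cusp, and reconcile these by controlling the number of $3$-dimensional faces carrying two or more cusps. The hypothesis enters through a distinguished hyperface: fix a hyperface $F$ of $Q^n$, so that $l:=c(F)\ge m$, and measure the remaining $3$-dimensional faces against the $l$ cusps lying on $F$, just as the $n=7$ argument measured $3$-dimensional faces against the cusps of a fixed multi-cusp $3$-dimensional face.

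First I would assemble the combinatorial input. Near a cusp of $Q^n$ the $2(n-1)$ incident hyperfaces split into $n-1$ parallel pairs and the faces through that cusp form an $(n-1)$-cube; hence a fixed cusp lies on exactly $\binom{n-1}{2}\,2^{\,n-3}$ of the $3$-dimensional faces and a fixed edge on exactly $\binom{n-1}{2}$ of them. I would then prove the $n$-dimensional analogue of Lemmas \ref{lem:4-1} and \ref{lem:5-1}: if two distinct $3$-dimensional faces share two cusps, they share an edge joining two of those cusps or a $2$-dimensional face having two of them as cusps; the proof is the ``two parallel hyperfaces meet in exactly one cusp'' argument of Sublemma \ref{sublem:3-4}. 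This yields a uniform bound, of order $\binom{n-1}{2}$ per pair of cusps, on the number of $3$-dimensional faces sharing a given pair of cusps, together with the refinement (from the third inequality of Proposition \ref{pro:2-2}) that a $3$-dimensional face with exactly six $2$-dimensional faces has at least three cusps. I would also use the incidence relation $2(n-1)\,c(Q^n)=\sum_H c(H)\ge m\,a_{n-1}(Q^n)$ over hyperfaces $H$, and the parallel-partner construction (the hyperface $F$ has a distinct parallel hyperface through each of its $l$ cusps), to bound the number of hyperfaces, and hence ultimately $a_3(Q^n)$, from below in terms of $m$.

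With these in hand I would classify the $3$-dimensional faces of $Q^n$ according to how many of their cusps lie on $F$, assign $a_2(R)\ge 12$ to those with at most one cusp and $a_2(R)\ge 6$ (or $7$, via the refinement) to the rest, rewrite everything relative to the Nikulin constant $C_n$ of Theorem \ref{thm:2-4} (which equals $9$ for $n=8$ and is at most $9$ for $9\le n\le 12$), and combine with $a_3^2(Q^n)<C_n$. As in the $n=7$ case this produces a lower bound on $a_3(Q^n)\,a_3^2(Q^n)$ of the form $C_n\,a_3(Q^n)$ plus a positive term counting $3$-dimensional faces with a single cusp on $F$ minus a bounded term counting $3$-dimensional faces with at least two cusps; summing the resulting estimate over all $c(Q^n)$ cusps --- the device that turned the bound ``$240-15(m-1)$'' into a quadratic inequality in the $n=7$ argument --- collapses it to a single inequality in $c(Q^n)$ and $l$, monotone in $l$, which after substituting $l\ge m$ rearranges to $c(Q^n)\ge 3m-2n+1$. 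The main obstacle is exactly this last reconciliation: the crude estimate ``$\binom{n-1}{2}$ three-dimensional faces per pair of cusps, over all $\binom{c(Q^n)}{2}$ pairs'' badly overcounts, and to make the final inequality linear in $m$ with coefficient $3$ one must see that only pairs of cusps lying on a common hyperface actually contribute and bound their number using the hypothesis; getting this bookkeeping tight, uniformly for $8\le n\le 12$, is where the real work lies.
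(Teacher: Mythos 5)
Your proposal does not prove the lemma: it is a plan whose decisive step is left open, and you say so yourself (``getting this bookkeeping tight \dots is where the real work lies''). Nothing in the sketch actually produces the inequality $c(Q^n)\geq 3m-2n+1$; you assert that the Nikulin-style bookkeeping ``rearranges to'' it, but you never exhibit the inequality in $c(Q^n)$, $l$ and $m$ that is supposed to rearrange, and there is no visible reason the counting of $3$-dimensional faces with many cusps would yield a bound that is linear in $m$ with coefficient exactly $3$ and constant $-2n+1$, uniformly over $8\leq n\leq 12$ (where the Nikulin constant itself varies: it is $9$ for $n=8$ but drops to $15/2$ for $n=11,12$). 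The hypothesis ``every hyperface has at least $m$ cusps'' also enters your sketch only obliquely, through a lower bound on $a_{n-1}(Q^n)$, whereas it must carry the entire weight of the conclusion.

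The paper's proof of this particular lemma is in fact completely different from, and much more elementary than, the $n=6,7$ arguments you are imitating: it does not use Nikulin's inequality, Corollary \ref{cor:2-1}, or any count of $3$-dimensional faces at all. One fixes a hyperface $L$, takes the $c(L)$ hyperfaces $L', L_1,\dots,L_{c(L)-1}$ parallel to $L$ through its cusps, and lets $k$ be the number of cusps of $Q^n$ on neither $L$ nor $L'$. Each $L_i$ has at least $m$ cusps, of which all but at most $1+k$ must lie on $L'$; since each cusp of $L'$ lies on only $2(n-1)-1$ hyperfaces besides $L'$, summing over $i$ gives $(m-1-k)(m-1)\leq(2n-3)(c(L')-1)$, hence a lower bound on $k$, and then $c(Q^n)\geq c(L)+c(L')-1+k$ simplifies to $3m-2n+1$ using $m\geq 2(n-1)$ (which is supplied inductively from the $n=7$ bound). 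If you want to salvage your approach you would need to carry the $n=7$ computation through to an explicit closed-form inequality for each $n$ from $8$ to $12$ and check it implies the stated bound; as written, the proposal stops exactly where the proof would have to begin.
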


\begin{proof}
Let $L$ be an $(n-1)$-dimensional face of $Q^n$. 
The number of $(n-1)$-dimensional faces 
which are parallel to $L$ and share exactly one cusp with $L$ is $c(L)$. 
Denote these $(n-1)$-dimensional faces by $L'$, $L_1$, $L_2$, $\cdots$, $L_{c(L)-1}$. 
Assume that $Q^n$ has $k$ cusps which are not shared by neither $L$ nor $L'$. 
Each $L_i$ $(1\leq i\leq c(L)-1)$ shares at least $c(L_i)-1-k$ cusps with $L'$. 
Because $c(L_i)\geq m$, $k$ is less than or equal to $m-1$. 
On the other hand, the number of $(n-1)$-dimensional faces sharing a cusp of $L'$ is $2(n-1)$. 
Thus, we obtain the following inequality: 
\begin{equation*}
(c(L_1)-1-k)+(c(L_2)-1-k)+\cdots +(c(L_{c(L)-1})-1-k)\leq (2(n-1)-1)(c(L')-1). 
\end{equation*} 
Because each $L_i$ $(1\leq i\leq c(L)-1)$ and $L$ must have at least $m$ cusps, 
we have 
\begin{equation}
(m-1-k)(m-1)\leq (2(n-1)-1)(c(L')-1). 
\end{equation}
By this inequality, we obtain 
\begin{equation*}
k\geq m-1-\frac{(2n-3)(c(L')-1)}{m-1}. 
\end{equation*}
Thus, we get 
\begin{eqnarray*}
c(Q^n) &\geq & c(L)+c(L')-1+k \\
&\geq & m+c(L')-1+m-1-\frac{(2n-3)(c(L')-1)}{m-1} \\
&\geq & 2m-2+\frac{2n-3}{m-1}+\frac{m-2(n-1)}{m-1}c(L'). 
\end{eqnarray*}
Because $Q^7$ has at least seventeen cusps, 
$m$ is greater than or equal to $2(n-1)$ for $8\leq n\leq 12$. 
Since $\frac{m-2(n-1)}{m-1}$ is positive and $L'$ has at least $m$ cusps, we have 
\begin{eqnarray*}
c(Q^n) &\geq & 2m-2+\frac{2n-3}{m-1}+\frac{m-2(n-1)}{m-1}c(L') \\
&\geq & 2m-2+\frac{2n-3}{m-1}+\frac{m-2(n-1)}{m-1}m \\
&= & 3m-2n+1. 
\end{eqnarray*} 
\end{proof}

\begin{flushleft}
\textbf{Acknowledgement} 
The author would like to express his deepest gratitude to Professor Hiroyasu Izeki 
whose comments and suggestions innumerably valuable throughout the course of his study. 
The author would also like to thank Professor Yoshiaki Maeda and Professor Leonid 
Potyagailo who helped him very much throughout the study in this paper. 
The author would like to thank the referee for helpful comments. 
\end{flushleft}


\begin{flushleft}
Jun Nonaka \\
Department of Mathematics, \\
Keio University,\\
3-14-1 Hiyoshi, Kouhoku-ku, Yokohama-shi, Kanagawa, 223-8522, Japan.\\
email: \verb|jun_b_nonaka@yahoo.co.jp|
\end{flushleft}

\begin{thebibliography}{99}
\bibitem{ref:avs}
 \textsc{D.~V.~Alekseevskij, E.~B.~Vinberg and A.~S.~Solodovnikov}, 
  \textit{Geometry II. Spaces of constant curvature}, 
  Encyclopaedia of Mathematical Sciences Vol 29, Springer-Verlag, (1993), 1--138. 
\bibitem{ref:and}
 \textsc{E.~M.~Andreev}, 
  \textit{On convex polyhedra of finite volume in Lobachevskij spaces}, 
  Math. USSR. Sb., \textbf{12} (1971), 255--259. 
\bibitem{ref:du}
 \textsc{G.~Dufour}, 
  \textit{Notes on right-angled Coxeter polyhedra in hyperbolic spaces}, 
  Geom. Dedicata, \textbf{147} (2009), 277--282. 
\bibitem{ref:ft}
 \textsc{A.~Felikson and P.~Tumarkin}, 
  \textit{On simple ideal hyperbolic Coxeter polytopes}, 
  Izvestiya: Mathematics, \textbf{72} no. 1 (2008), 113--126. 
\bibitem{ref:kh}
 \textsc{A.~G.~Khovanskij}, 
  \textit{Hyperplane sections of polyhedra, toroidal manifolds and discrete groups in Lobachevskij space}, 
  Functional Anal. Appl., \textbf{20} (1986), 41--50. 
\bibitem{ref:k}
 \textsc{A.~Kolpakov}, 
  \textit{On the optimality of the ideal right-angled 24-cell}, 
  Algebr. Geom. Topology, \textbf{12} no. 4 (2012), 1941--1960. 
\bibitem{ref:ko}
 \textsc{A.~Kolpakov}, 
  \textit{Deformation of finite-volume hyperbolic Coxeter polyhedra, limiting growth rates and Pisot numbers}, 
  European J. Combin., \textbf{33} no. 8 (2012), 1709--1724. 
\bibitem{ref:nik}
 \textsc{V.~Nikulin}, 
  \textit{On the classification of arithmetic groups generated by reflections in Lobachevsky spaces}, 
  Math. USSR. Izv., \textbf{18} (1982), 99--123. 
\bibitem{ref:pv}
 \textsc{L.~Potyagailo and E.~B.~Vinberg}, 
  \textit{On right-angled reflection groups in hyperbolic spaces}, 
  Comment. Math. Helv., \textbf{80} (2005), 1--12. 
\bibitem{ref:vin}
 \textsc{E.~B.~Vinberg}, 
  \textit{Absence of crystallographic reflextion groups in Lobachevsky space of large dimension}, 
   Trans. Mosc. Math. Soc., \textbf{47} (1985), 75--112. 
\end{thebibliography}
\end{document}